\theoremstyle{plain}
\DeclareMathAlphabet{\mathpzc}{OT1}{pzc}{m}{it}
\newtheorem{theorem}{Theorem}
\newtheorem{prop}[theorem]{Proposition}
\newtheorem{cor}[theorem]{Corollary}
\newtheorem{lem}[theorem]{Lemma}
\newcommand*{\email}[1]{%
    \normalsize\href{mailto:#1}{#1}\par
    }
\title{Independence complexes of generalized Mycielskian graphs}
\author{Andr\'es Carnero Bravo\;\orcidlink{0009-0001-0221-5908}}
\affil{Centro de Ciencias Matemáticas, UNAM\\ \email{carnero@matmor.unam.mx}}
\begin{document}
\maketitle
\begin{abstract}
We show that the homotopy type of the independence complex of the generalized Mycielskian of a graph $G$ is determined by the homotopy 
type of the independence complex of $G$ and the homotopy type of the independence complex of the Kronecker double cover of $G$. 
As an application we calculate the homotopy 
type for paths, cycles and the categorical product of two complete graphs.
\end{abstract}
\textbf{\textit{Keywords:}} Independence complex, Mycielski graphs, homotopy type\\
\textbf{\textit{Mathematics Subject Classification:}} 05C76, 05E45, 55P15\\
\textbf{Acknowledgments.} This work was supported by UNAM Posdoctoral Program (POSDOC) and by the project PAPIIT IN101423.
\section{Introduction}
In this note we will study the independence complex of the Mycielskian and the generalized Mycielskian of a graph $G$.
The Mycielski graphs were defined to construct examples of graphs without triangles and chromatic number arbitrarily big 
\citep{mycielski}. This construction has been widely studied (see for example \citep{alishahi,dliou,dovslic,fisherdavid,huang,larsen}). 
Later, the construction was generalized 
--originaly called \textit{cones over graphs} \citep{tardif}-- and this generalization has been considerably studied 
(see for example \citep{bidine,lin2010,lin2006}).  

The independence complex of a graph is an extensively studied simplicial complex where the simplices are the independent sets 
(see for example \citep{barmak,Ehrenborg_2006,engstrom09,meshulandom}). Much of the work have been focused in particular graph families 
(see for example \citep{indcomplcatprod,indpcomplgrpprod,homotopygoyal,matsushitan4n5,matsushitan6}).

Our main result is that the independence complex of the generalized Mycielskian of $G$
has the homotopy type of a wedge of spaces where 
each space will be an iterated suspension and/or join of two simplicial complexes: $I(G)$ and $I(G\times P_2)$. 
The graph $G\times P_2$ is the 
categorical product of the graph $G$ and the path of length one, this graph is called the 
\textit{Kronecker double cover} or the \textit{canonical double cover}, this construction also has been significantly studied 
(see for example \citep{gevaykrncv,imrichmultkrncov,krncgenptskrcv,matsushitaegbcomplkrdcovers,matsushitagkrnbipknser,wallerdobcover}).
We also give a formula for the homotopy type of the independence complex of the Kronecker double cover generalized Mycielskian, with this formula we are able to give 
a formula for the iterated generalized Mycielskians $\mu_{3l+1}^r(G)$ and $\mu_{3l+2}^r(G)$.

As far as we know the only results on the independence complex of 
Mycielscki graphs are in \citep{homotopygoyal}, there its shown that the independence complex of the usual Mycielskian of $G$ has the 
homotopy type of the suspension of the independence complex of $G$ and that the independence complex of the generalized 
Mycielsckian of a complete graph has the homotopy type of a wedge of spheres. We obtain these results as corollaries of 
our main result 
Theorem \ref{teomul}. As example, we determine the homotopy type of the independence complex of generalized Mycielsckians for paths, 
cycles and the categorical product of two complete graphs.

\section{Preliminaries}
We are only interested in simple graphs. Given a graph $G$, $V(G)$ is its vertex set and $E(G)$ is its edge set.
Taking the set $\underline{n}=\{1,\dots,n\}$, we define three graphs with vertex set $\underline{n}$:
\begin{itemize}
    \item The \textit{path} on $n$ vertices is the graph $P_n$ with edge set $E(P_n)=\{\{i,j\}:\;|i-j|=1\}$.
    \item The \textit{cycle} on $n\geq3$ vertices is the graph $C_n$ with edge set $E(C_n)=\{\{i,j\}:\;|i-j|=1\}\cup\{1n\}$.
    \item The \textit{complete graph} $K_n$ is the graph with edge set $E(K_n)=\{\{i,j\}:\; i\neq j\}$.
\end{itemize}
Given two graphs $G$ and $H$, their 
\textit{categorical product} is the graph $G\times H$ with vertex set $V(G)\times V(H)$ and where $\{(u,v),(x,y)\}$ is 
an edge if $\{u,x\}$ is an edge of $G$ and $\{v,y\}$ is an edge of $H$. Notice that if $G$ is bipartite then $G\times P_2\cong G\sqcup G$, 
\textit{i.e.} $G\times P_2$ is isomorphic to the disjoint union of two copies of $G$. For any graph $G$, $G\times P_2$ is 
called the \textit{Kronecker double cover} of $G$.

Given a graph $G$ and $l\geq0$, we define the $l$-\textit{Mycielskian} of $G$ as the graph $\mu_l(G)$ (see Figure \ref{mulg})
obtained from $G\times P_{l+1}$ by:
\begin{itemize}
    \item Adding a new vertex $w$ and all the possible edges between $w$ and $V(G)\times\{1\}$.
    \item Adding the edges $\{(u,l+1),(v,l+1)\}$ for all $\{u,v\}\in E(G)$.
\end{itemize}
Notice that $\mu_0(G)$ is the \textit{cone} of $G$.  
For $r=1$ we take $\mu_l^1(G)=\mu_l(G)$ and for $r\geq2$ we take $\mu_l^r(G)=\mu_l\left(\mu_l^{r-1}(G)\right)$. We call $\mu_l^r(G)$ the $r$-\textit{iterated} $l$-\textit{Mycielskian} of $G$.

\begin{figure}
\centering
\begin{tikzpicture}[line cap=round,line join=round,>=triangle 45,x=1.0cm,y=1.0cm]
\clip(2,3) rectangle (10,5);
\draw [rotate around={90:(3,4)}] (3,4) ellipse (0.71cm and 0.5cm);
\draw [rotate around={90:(8.5,4)}] (8.5,4) ellipse (0.71cm and 0.5cm);
\draw [rotate around={90:(4.5,4)}] (4.5,4) ellipse (0.71cm and 0.5cm);
\draw [rotate around={90:(7,4)}] (7,4) ellipse (0.71cm and 0.5cm);
\draw (3.35,3.5)-- (4.15,4.5);
\draw (4.15,3.5)-- (3.35,4.5);
\draw (8.15,4.5)-- (7.35,3.5);
\draw (8.15,3.5)-- (7.35,4.5);
\draw (9.5,4)-- (8.85,4.5);
\draw (9.5,4)-- (9,4);
\draw (9.5,4)-- (8.85,3.5);
\begin{scriptsize}
\fill [color=black] (9.5,4) circle (1.5pt);
\draw [color=black] (9.5,4.2) node {$w$};
\draw [color=black] (3,4) node {$G$};
\draw [color=black] (4.5,4) node {$V_l$};
\draw [color=black] (7,4) node {$V_{2}$};
\draw [color=black] (8.5,4) node {$V_{1}$};
\draw [color=black] (5.75,4) node {$\cdots$};
\draw [color=black] (5.5,4) node {$\cdots$};
\draw [color=black] (5.25,4) node {$\cdots$};
\draw [color=black] (6,4) node {$\cdots$};
\draw [color=black] (6.25,4) node {$\cdots$};
\end{scriptsize}
\end{tikzpicture}
\caption{$\mu_l(G)$}
\label{mulg}
\end{figure}

The \textit{independence complex} of  graph $G$ is the simplicial complex $I(G)$ where a set of vertices $\sigma$ is a simplex 
if no two vertices in it are adjacent, \textit{i.e.} there is no edge between any two vertices in the set. We will not distinguish 
between the simplcial complex and its geometric realization. The join of $k$ copies of $I(G)$ will be denoted $I(G)^{*k}$, we will take $I(G)^{*0}$ as the simplicial complex $\{\emptyset\}$. 
$\Sigma^rI(G)$ means we are taking the complex $I(G)$ 
suspended $r$ times.
We will need the following results about the homotopy type of independence complexes. 

\begin{lem}\label{lemundisj}\citep{engstrom09}
If $G=G_1+G_2$, then $I(G)=I(G_1)*I(G_2).$
\end{lem}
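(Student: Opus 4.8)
The plan is to prove this by directly comparing the face sets of the two sides, since a simplicial complex is determined by its set of faces (together with its vertex set). First I would record the two elementary facts about a disjoint union that I will use: in $G = G_1 + G_2$ the vertex set splits as $V(G) = V(G_1)\sqcup V(G_2)$, and every edge of $G$ is an edge of exactly one of $G_1$, $G_2$; in particular there is no edge of $G$ with one endpoint in $V(G_1)$ and the other in $V(G_2)$.

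Next I would take an arbitrary subset $\sigma\subseteq V(G)$ and write $\sigma = \sigma_1\sqcup\sigma_2$ with $\sigma_i = \sigma\cap V(G_i)$. The key observation is that $\sigma$ contains an edge of $G$ if and only if $\sigma_1$ contains an edge of $G_1$ or $\sigma_2$ contains an edge of $G_2$: one direction is immediate, and the other uses the absence of cross edges, so any edge lying inside $\sigma$ must in fact lie entirely inside one of the $\sigma_i$. Consequently $\sigma$ is independent in $G$ precisely when $\sigma_1\in I(G_1)$ and $\sigma_2\in I(G_2)$.

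Finally I would invoke the definition of the join of simplicial complexes: since $V(G_1)$ and $V(G_2)$ are disjoint, a face of $I(G_1)*I(G_2)$ is exactly a set of the form $\sigma_1\cup\sigma_2$ with $\sigma_1\in I(G_1)$ and $\sigma_2\in I(G_2)$. By the previous paragraph this describes exactly the faces of $I(G)$. As both complexes have the same vertex set $V(G_1)\sqcup V(G_2)$ and the same faces, they coincide as simplicial complexes, hence also as their geometric realizations.

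As for the main obstacle: there is essentially none, since the whole argument is an unwinding of the definitions of disjoint union, independent set, and simplicial join. The single point needing a word of care is the forward implication in the second paragraph, where one must explicitly use that $G_1 + G_2$ has no edges between the two parts; everything else is bookkeeping.
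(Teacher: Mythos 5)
Your proof is correct. The paper states this lemma as a citation to Engstr\"om and gives no proof of its own, so there is nothing to diverge from: your argument is the standard one, a direct unwinding of the definitions showing that a subset $\sigma\subseteq V(G_1)\sqcup V(G_2)$ is independent in $G_1+G_2$ exactly when each piece $\sigma\cap V(G_i)$ is independent in $G_i$, which is precisely the description of the faces of the join. The one point you rightly flag --- that the absence of cross edges is what makes the forward implication work --- is indeed the only substantive step.
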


\begin{lem}\citep{engstrom09}\label{lemmvertdad}
Let $u,v$ be different vertices of a graph $G$. If $N_G(u)\subseteq N_G(v)$, then $I(G)\simeq I(G-v)$.
\end{lem}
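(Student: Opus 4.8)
The plan is to realize $I(G-v)$ as the full subcomplex of $I(G)$ spanned by $V(G)\setminus\{v\}$ and to show that this inclusion is a homotopy equivalence by a gluing argument on $I(G)$. The first step is the elementary observation that $u$ and $v$ are non-adjacent: if $v\in N_G(u)$ then $u\in N_G(v)$, and since $N_G(u)\subseteq N_G(v)$ this would force $v\in N_G(v)$, impossible in a simple graph. In particular $u$ is a vertex of the subgraph of $G$ induced on $V(G)\setminus(\{v\}\cup N_G(v))$, a fact we will use at the end.

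Next I would write $I(G)=I(G-v)\cup \mathrm{st}_{I(G)}(v)$, where $\mathrm{st}_{I(G)}(v)=\{v\}*\mathrm{lk}_{I(G)}(v)$ denotes the closed star of $v$. Both pieces are subcomplexes of $I(G)$; their union is everything because an independent set either avoids $v$ or contains it; and their intersection is exactly $\mathrm{lk}_{I(G)}(v)$. Since $\mathrm{st}_{I(G)}(v)$ is a cone it is contractible, so by the gluing lemma it suffices to prove that $\mathrm{lk}_{I(G)}(v)$ is contractible as well; then $I(G)$, being the homotopy pushout of the contractible complex $\mathrm{st}_{I(G)}(v)$ and $I(G-v)$ along the contractible complex $\mathrm{lk}_{I(G)}(v)$, is homotopy equivalent to $I(G-v)$.

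The heart of the argument is that $\mathrm{lk}_{I(G)}(v)$ is itself a cone, with apex $u$. Unwinding the definitions, a set $\tau$ is a face of $\mathrm{lk}_{I(G)}(v)$ precisely when $\tau$ is independent, $v\notin\tau$, and $\tau\cap N_G(v)=\emptyset$; equivalently, $\mathrm{lk}_{I(G)}(v)$ is the independence complex of the subgraph of $G$ induced on $V(G)\setminus(\{v\}\cup N_G(v))$. Given such a $\tau$, I claim $\tau\cup\{u\}$ satisfies the same three conditions: it avoids $v$ since $u\ne v$; it misses $N_G(v)$ since $u\notin N_G(v)$; and it is independent, since $N_G(u)\subseteq N_G(v)$ together with $\tau\cap N_G(v)=\emptyset$ forces $\tau\cap N_G(u)=\emptyset$, so $u$ is adjacent to no vertex of $\tau$. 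Hence $\tau\cup\{u\}$ is again a face, $u$ is a cone point, and $\mathrm{lk}_{I(G)}(v)$ is contractible, which completes the proof.

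I do not expect a genuine obstacle; the only point needing care is correctly identifying the link of $v$ inside $I(G)$ and checking that $u$ is a legitimate vertex of it — precisely where the non-adjacency of $u$ and $v$ is used. As a remark, the same computation shows $\mathrm{st}_{I(G)}(v)\subseteq\mathrm{st}_{I(G)}(u)$, i.e. $v$ is dominated by $u$, so in fact $I(G)$ collapses onto $I(G-v)$, a mild strengthening of the stated homotopy equivalence.
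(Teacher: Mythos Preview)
Your proof is correct. The paper does not give its own argument for this lemma; it cites it from Engstr\"om and then remarks that it is a particular case of Proposition~\ref{inclunullhmt}. Via that route one observes that $I(G-N_G[v])=\mathrm{lk}_{I(G)}(v)$ is contractible (since $u$ is a cone point, exactly as you show), so the inclusion into $I(G-v)$ is null-homotopic and $I(G)\simeq I(G-v)\vee\Sigma I(G-N_G[v])\simeq I(G-v)$. Your star/link decomposition with the gluing lemma is the same idea carried out directly, without passing through the splitting statement; the key step in both is identifying $u$ as a cone point of the link of $v$. Your closing remark that $v$ is dominated by $u$, and hence that $I(G)$ actually collapses onto $I(G-v)$, is the sharpest formulation and is precisely how this lemma is usually proved in the source you would be citing.
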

The last lemma is a particular case of the following proposition.
\begin{prop}\citep{adamsplit}\label{inclunullhmt}
If $I(G-N_G[v])\longhookrightarrow I(G-v)$ is null-homotopic, then $I(G)\simeq I(G-v)\vee\Sigma I(G-N_G[v])$.
\end{prop}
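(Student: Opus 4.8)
The plan is to realize $I(G)$ as a homotopy pushout assembled from a contractible piece and the inclusion $I(G-N_G[v])\longhookrightarrow I(G-v)$, and then to invoke the standard fact that the mapping cone of a null-homotopic map splits off a suspension as a wedge summand.

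First I would decompose $I(G)$ along the vertex $v$. Let $\operatorname{st}(v)$ denote the closed star of $v$ in $I(G)$, i.e.\ the subcomplex of all simplices $\sigma$ with $\sigma\cup\{v\}$ independent. Every simplex of $I(G)$ either avoids $v$, hence lies in $I(G-v)$, or contains $v$, hence lies in $\operatorname{st}(v)$; thus $I(G)=I(G-v)\cup\operatorname{st}(v)$. The star $\operatorname{st}(v)$ is the cone $v*\operatorname{lk}(v)$, so it is contractible. The intersection $I(G-v)\cap\operatorname{st}(v)$ is exactly the link $\operatorname{lk}(v)$: a simplex $\sigma$ not containing $v$ lies in $\operatorname{st}(v)$ iff $\sigma\cup\{v\}$ is independent iff $\sigma$ avoids every vertex of $N_G[v]$, i.e.\ iff $\sigma$ is a simplex of $I(G-N_G[v])$. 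Hence $\operatorname{lk}(v)=I(G-N_G[v])$.

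Since inclusions of subcomplexes are cofibrations, the square with corners $I(G-N_G[v])$, $I(G-v)$, $\operatorname{st}(v)$, $I(G)$ is a homotopy pushout; replacing the contractible $\operatorname{st}(v)$ by the cone on $I(G-N_G[v])$ identifies $I(G)$, up to homotopy, with the mapping cone of the inclusion $\iota\colon I(G-N_G[v])\longhookrightarrow I(G-v)$. I would then use the general fact that the homotopy type of a mapping cone $C_f$ depends only on the homotopy class of $f$, together with the observation that when $f\colon A\to X$ is null-homotopic one may take $f$ constant, in which case the cone $CA$ is attached to $X$ along a single point, giving $C_f\simeq X\vee(CA/A)\simeq X\vee\Sigma A$. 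Applying this with $f=\iota$, $X=I(G-v)$, $A=I(G-N_G[v])$ yields $I(G)\simeq I(G-v)\vee\Sigma I(G-N_G[v])$.

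The point requiring care is the homotopy-invariance step: one must note that the pushout square is genuinely a homotopy pushout (automatic here, since simplicial inclusions are cofibrations) and that the unreduced suspension produced as $CA/A$ agrees up to homotopy with $\Sigma I(G-N_G[v])$. Both facts are routine and standard, so the only real content is the star/link bookkeeping in the second paragraph; I expect no substantive obstacle beyond stating these reductions cleanly.
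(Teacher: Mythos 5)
Your argument is correct and is essentially the standard proof of this statement: the paper itself cites it from the reference without proof, and the star/link decomposition $I(G)=I(G-v)\cup\operatorname{st}(v)$ with $\operatorname{st}(v)=v*I(G-N_G[v])$ contractible, followed by the mapping-cone splitting for a null-homotopic attaching map, is exactly the argument in the cited source. The only point you leave implicit is the degenerate convention when $G-N_G[v]$ has no vertices (where $\Sigma$ of the empty complex is read as $\mathbb{S}^0$), which is handled by convention in this literature and does not affect the argument.
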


In Section 4 we will need some functions to count the number of suspensions in the homotopy type of iterated Mycielskians. 
For $k\geq0$ and $r\geq1$, we define:
$$f(k,r)=\left\lbrace\begin{array}{cc}
  r   &  \mbox{ if } k=0\\
  \frac{(2k+1)^r-1}{2k}   &  \mbox{ if } k\neq0
\end{array}\right.\;\;\mbox{ and}\;\;g(k,r)=\frac{(2k+2)^r-1}{2k+1}.$$

\begin{lem}\label{lemfg}
For $k\geq0$ and $r\geq1$ we have the following identities:
$$f(k,r)=\sum_{i=0}^{r-1}(2k+1)^i,\;\;\;g(k,r)=\sum_{i=0}^{r-1}(2k+2)^i$$
$$k\sum_{i=1}^{r-1}f(k,i)=\frac{f(k,r)-r}{2},\;\;\;(k+1)g(k,r)=\frac{g(k,r+1)-1}{2}$$
\end{lem}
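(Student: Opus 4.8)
The plan is to verify each of the four identities directly from the closed-form definitions of $f(k,r)$ and $g(k,r)$, treating the case $k=0$ separately only where the definition of $f$ forces it. The two geometric-series identities are essentially just the standard formula $\sum_{i=0}^{r-1}x^i = (x^r-1)/(x-1)$ specialized to $x=2k+1$ and $x=2k+2$; the only subtlety is that when $k=0$ the ratio $(2k+1)^r-1)/(2k)$ is $0/0$, which is why $f(0,r)$ is defined to be $r$, and indeed $\sum_{i=0}^{r-1}1^i = r$, so the first identity holds in that case too. For $g$ there is no degenerate case since $2k+2\geq 2$ always.

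**Next I would** establish the two ``sum of $f$'s'' and ``multiple of $g$'' identities. For the third identity, write $k\sum_{i=1}^{r-1} f(k,i)$; for $k\neq 0$ substitute $f(k,i) = \frac{(2k+1)^i - 1}{2k}$, so $k\sum_{i=1}^{r-1} f(k,i) = \frac{1}{2}\sum_{i=1}^{r-1}\bigl((2k+1)^i - 1\bigr) = \frac{1}{2}\bigl(\sum_{i=1}^{r-1}(2k+1)^i - (r-1)\bigr)$. Using the first identity, $\sum_{i=1}^{r-1}(2k+1)^i = f(k,r) - 1$, so the bracket becomes $f(k,r) - 1 - (r-1) = f(k,r) - r$, giving $\frac{f(k,r)-r}{2}$ as claimed. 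For $k=0$ the left side is $0$ and the right side is $(r-r)/2 = 0$, so it holds trivially. For the fourth identity, $(k+1)g(k,r) = (k+1)\cdot\frac{(2k+2)^r - 1}{2k+1} = \frac{(2k+2)^r - 1}{2}$ since $2(k+1) = 2k+2$ cancels appropriately — more precisely $(k+1)/(2k+1)$ does not simplify, so instead I would compute $g(k,r+1) = \frac{(2k+2)^{r+1}-1}{2k+1}$ and check that $\frac{g(k,r+1)-1}{2} = \frac{1}{2}\cdot\frac{(2k+2)^{r+1} - 1 - (2k+1)}{2k+1} = \frac{(2k+2)^{r+1} - (2k+2)}{2(2k+1)} = \frac{(2k+2)\bigl((2k+2)^r - 1\bigr)}{2(2k+1)} = \frac{(k+1)\bigl((2k+2)^r-1\bigr)}{2k+1} = (k+1)g(k,r)$, which is exactly the desired identity.

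**The only mild obstacle** is bookkeeping around the piecewise definition of $f$: one must confirm that the first identity $f(k,r) = \sum_{i=0}^{r-1}(2k+1)^i$ really does unify both branches (it does, as noted), because the third identity's proof invokes it, and one should be slightly careful that the manipulation $\sum_{i=1}^{r-1}(2k+1)^i = f(k,r)-1$ is valid for all $k\geq 0$ including $k=0$ (where it reads $(r-1) = r - 1$). Beyond that, everything is elementary algebra with finite geometric sums, and no induction is strictly necessary, though one could alternatively prove the recurrences $f(k,r+1) = (2k+1)f(k,r) + 1$ and $g(k,r+1) = (2k+2)g(k,r) + 1$ first and derive all four identities from them if a more uniform treatment is preferred. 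I would present the direct computation since it is shorter.
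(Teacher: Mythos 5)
Your proposal is correct and follows essentially the same route as the paper: elementary manipulation of finite geometric sums, with the $k=0$ branch of $f$ checked separately (the paper proves the first two identities by a one-line induction on $r$ rather than quoting the closed-form geometric-series formula, but this is an immaterial difference). Your self-corrected computation of the fourth identity via $\frac{g(k,r+1)-1}{2}$ lands on the same algebra the paper uses.
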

\begin{proof}
For $k=0$ all the identities are clear. Assume that $k\geq1$. For the first two identities we will use induction over $r$. 
For $r=1$ is clear, we assume it is true for $r$. Then, for $r+1$ we have that
$$\sum_{i=0}^{r}(2k+1)^i=f(k,r)+(2k+1)^r=f(k,r+1)$$
$$\sum_{i=0}^{r}(2k+2)^i=g(k,r)+(2k+2)^r=g(k,r+1)$$
Now
$$k\sum_{i=1}^{r-1}f(k,i)=\frac{1}{2}\sum_{i=1}^{r-1}((2k+1)^{i-1}-1)=\frac{1}{2}\left(f(k,r)-r\right)$$
$$(k+1)g(k,r)=\frac{1}{2}\sum_{i=1}^{r}(2k+2)^i=\frac{1}{2}\left(g(k,r+1)-1\right)$$
\end{proof}
\section{Independence complex of \texorpdfstring{$l$}{Î£}-Mycielskians}
In this section we prove our main theorem and use it to calculate the homotopy type of the independence complex of the  $l$-Mycielskian
for various families of graphs.

\begin{theorem}\label{teomul}
For any graph $G$, 
$$I(\mu_l(G))\simeq\left\lbrace
\begin{array}{cc}
I(G)*I(G\times P_2)^{*k}\vee\Sigma I(G\times P_2)^{*k} & \mbox{if } l=3k\\
\Sigma I(G)*I(G\times P_2)^{*k} & \mbox{if } l=3k+1\\
I(G\times P_2)^{*k+1} & \mbox{if } l=3k+2
\end{array}\right.$$
\end{theorem}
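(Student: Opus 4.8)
The plan is to induct on $l$ by repeatedly applying Proposition \ref{inclunullhmt}, peeling off one "layer" of the Mycielskian at a time. Write $G \times P_{l+1}$ with layers $V_0 = V(G) \times \{0\}, V_1, \dots, V_l$ (relabeling the picture so that layer $0$ is the bottom copy of $G$, layer $l$ carries the extra edges, and $w$ is attached to $V_1$ — I will match indices to the figure carefully in the writeup). The natural vertex to delete is $w$. Its closed neighborhood is $\{w\} \cup V_1$, so $\mu_l(G) - N[w]$ is the graph on layers $V_0, V_2, V_3, \dots, V_l$ plus the top edges, while $\mu_l(G) - w = G \times P_{l+1}$ with the top edges, which is a bipartite-type "ladder." To use the proposition I must (i) identify both of these smaller complexes up to homotopy, and (ii) verify the inclusion $I(\mu_l(G) - N[w]) \hookrightarrow I(\mu_l(G) - w)$ is null-homotopic.

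For step (i) the key observation is a fold/domination argument via Lemma \ref{lemmvertdad}: in $\mu_l(G) - w$, each vertex $(u,0)$ in the bottom layer has neighborhood $\{(v,1) : \{u,v\} \in E(G)\}$, which is contained in the neighborhood of $(u,1)$; iterating such collapses layer by layer should reduce $\mu_l(G)-w$ to something built from $I(G)$ and $I(G \times P_2)$. Concretely I expect: removing $w$ and then folding leaves a graph whose independence complex is $I(G) * I(G\times P_2)^{*k}$-like, and similarly $\mu_l(G)-N[w]$ is a Mycielskian-type object one "period of 3" smaller, so that an inductive hypothesis on $l-3$ (or directly on the structure) applies. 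The disjoint-union Lemma \ref{lemundisj} handles the fact that after enough folding the remaining graph breaks into independent blocks, and the identity $G \times P_2 \cong G \sqcup G$ for bipartite pieces feeds in exactly the joins $I(G\times P_2)^{*k}$. The three cases $l = 3k, 3k+1, 3k+2$ arise because the three types of layers (bottom copy of $G$, interior "doubled" layers contributing $I(G\times P_2)$, top layer with its own edges) cycle with period $3$ as $l$ grows.

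For step (ii), null-homotopy of the inclusion: the target $I(\mu_l(G)-w)$ deformation-retracts (by the folds above) onto a subcomplex in which the vertices of $V_0$ — which is where $I(\mu_l(G)-N[w])$ "lives" relative to the rest — have been absorbed, so the image of the inclusion lands in a cone or collapses; alternatively, every maximal simplex of $I(\mu_l(G)-N[w])$ extends by a common vertex inside $I(\mu_l(G)-w)$ (a vertex of $V_1$ not adjacent to anything in layers $\geq 2$ after the top edges are accounted for), giving a cone and hence null-homotopy. Establishing this cleanly — pinning down one explicit vertex (or a contractible subcomplex) that dominates the image — is the step I expect to be the main obstacle, since it requires bookkeeping of which layer-$1$ vertices survive the folding and showing their joint non-adjacency to the relevant set.

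Granting (i) and (ii), Proposition \ref{inclunullhmt} gives
$$I(\mu_l(G)) \simeq I(\mu_l(G)-w) \vee \Sigma I(\mu_l(G)-N[w]),$$
and then the induction closes: in the $l=3k$ case the retract contributes $I(G)*I(G\times P_2)^{*k}$ and the suspended term contributes $\Sigma I(G\times P_2)^{*k}$ (the $\Sigma$ absorbing one factor); the $l=3k+1$ and $l=3k+2$ cases follow by the same computation with the roles of "bottom layer" and "top layer" shifted, using that $\Sigma I(G)*I(G\times P_2)^{*k}$ and $I(G\times P_2)^{*(k+1)}$ are what the two pieces collapse to. Finally I would check the base cases $l=0,1,2$ directly: $\mu_0(G)$ is a cone so $I(\mu_0(G))$ is contractible, matching $I(G)*I(G\times P_2)^{*0} \vee \Sigma I(G\times P_2)^{*0} \simeq \mathrm{pt}$ once one notes the wedge with the cone point; $l=1$ recovers Goyal–Shukla–Singh's suspension result $\Sigma I(G)$; and $l=2$ gives $I(G\times P_2)$, which can be seen by a direct folding argument and serves as the anchor for the period-$3$ recursion.
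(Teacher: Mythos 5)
Your overall skeleton --- delete the apex, fold the remaining layers with Lemma \ref{lemmvertdad}, and assemble via Proposition \ref{inclunullhmt} --- is the paper's strategy, but several of your concrete steps are wrong and the step you flag as the main obstacle is genuinely left open. First, the fold you propose does not exist: in $G\times P_{l+1}$ the neighbours of $(u,i)$ lie in the layers adjacent to $i$ while those of $(u,i+1)$ lie in layers $i$ and $i+2$, so $N((u,i))\not\subseteq N((u,i+1))$ --- these neighbourhoods are disjoint unless $u$ is isolated in $G$. The correct domination skips a layer: after removing the apex one has $N((v,1))\subseteq N((v,3))$, so one deletes $V_3$, then $V_6$, and so on. Deleting every third layer is exactly what breaks the remainder into $k$ disjoint blocks, each induced by two consecutive surviving layers and isomorphic to $G\times P_2$, plus a leftover of $0$, $1$ or $2$ layers depending on $l\bmod 3$; Lemma \ref{lemundisj} then produces the join $I(G\times P_2)^{*k}$ with the leftover. (These $G\times P_2$ factors arise from pairs of adjacent layers of the product for \emph{arbitrary} $G$; the identity $G\times P_2\cong G\sqcup G$ for bipartite $G$ plays no role here.) Your index bookkeeping is also off: $w$ is attached to an end layer, so $\mu_l(G)-N[w]$ consists of the layers $V_2,\dots,V_{l+1}$ only; there is no extra layer $V_0$ left dangling, and the same skip-a-layer fold applied there gives $I(G\times P_2)^{*k}$, $I(G)*I(G\times P_2)^{*k}$, or a point according to the residue of $l$.

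Second, your base case is wrong: $\mu_0(G)$ is the graph cone, whose independence complex is $I(G)$ together with an isolated vertex $w$, hence $I(G)\vee\mathbb{S}^0$, not contractible --- the independence complex of a graph cone is not the topological cone. This does agree with the stated formula, since $I(G\times P_2)^{*0}=\{\emptyset\}$, so $I(G)*\{\emptyset\}=I(G)$ and $\Sigma\{\emptyset\}=\mathbb{S}^0$. Third, for the null-homotopy of $I(\mu_l(G)-N[w])\hookrightarrow I(\mu_l(G)-w)$ your cone-vertex idea cannot work in the only case where anything needs proving: for $l=3k$ neither complex is contractible, and a vertex of $V_1$ coning off the image would have to be non-adjacent to everything in $V_2$, which fails unless $G$ has isolated vertices. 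For $l=3k+1$ the target is contractible and for $l=3k+2$ the source is, so those cases are automatic; for $l=3k$ the paper instead compares connectivities, using that $I(\mu_{3k}(G)-w)\simeq I(G)*I(G\times P_2)^{*k}$ carries one more join factor than $I(\mu_{3k}(G)-N[w])\simeq I(G\times P_2)^{*k}$ and is therefore connected beyond what is needed to kill the inclusion. Since you explicitly defer exactly this step and your folding computation feeding into both sides is based on a false domination, the proposal as written does not close.
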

\begin{proof}
For $l=0$, it is clear that $I(\mu_0(G))\cong I(G)\vee\mathbb{S}^0$. We assume that $l\geq1$.
We take $V_i=V(G)\times\{i\}$.
In $\mu_l(G)-x$, we have that $N_{\mu_l(G)}((v,1))\subseteq N_{\mu_l(G)}((v,3))$ for any $v$ in $V(G)$. Thus, by Lemma \ref{lemmvertdad}, 
$$I(\mu_l(G)-x)\simeq I(\mu_l(G)-x-V_3).$$
In $H=\mu_l(G)-x-V_3$, we have that $N_H((v,4))\subseteq N_H((v,6))$ for all $v$ in $V(G)$. Therefore, by Lemma \ref{lemmvertdad}, 
we can delete $V_6$. We keep doing this until we have that
$$I(\mu_l(G)-x)\simeq I(\mu_l(G)-x-V_3-V_6-\cdots-V_{3k}).$$
We take $W=\mu_l(G)-x-V_3-V_6-\cdots-V_{3k}$. Now we have the following $3$ cases: 
\begin{itemize}
    \item If $l=3k$, then $I(\mu_l(G)-x)\simeq I(W)\cong I(G)*I(G\times P_2)^{*k}$.
    \item If $l=3k+1$, then $I(\mu_l(G)-x)\simeq I(W)\cong I(G\times P_2)^{*k}*I(W')$ where 
    $$W'=\mu_l(G)[V(G)\times\{3k+1,3k+2\}].$$
    Now, we have that $N_{W'}((v,3k+1))\subseteq N_{W'}((v,3k+2))$ for all $v$ in $V(G)$, by Lemma \ref{lemmvertdad} we have that
    $I(W')\simeq I(W'-V_{3k+2})$.  Notice, that $W'-V_{3k+2}$ is the disjoint union of vertices, then $I(W'-V_{3k+2})$ is contractible. Therefore $I(\mu_l(G)-x)\simeq*$.
    \item If $l=3k+2$, then $I(\mu_l(G)-x)\simeq I(W)\cong I(G\times P_2)^{*k}*I(W')$ where $$W'=\mu_l(G)[V(G)\times\{3k+1,3k+2,3(k+1)\}].$$ 
    Now, we have that $N_{W'}((v,3k+1))\subseteq N_{W'}((v,3k+3))$ for all $v$ in $V(G)$,  by Lemma \ref{lemmvertdad} we have that
    $I(W')\simeq I(W'-V_{3k+3})\cong I(G\times P_2)$. Therefore $I(G-x)\simeq I(G\times P_2)^{*k+1}$.
\end{itemize}
With similar arguments it is easy to see that: 
$$I(\mu_{3k}(G)-N_{\mu_l(G)}[x])\simeq I(G\times P_2)^{*k},\;\;I(\mu_{3k+1}(G)-N_{\mu_l(G)}[x])\simeq I(G)*I(G\times P_2)^{*k}$$ and 
$I(\mu_{3k+2}(G)-N_{\mu_l(G)}[x])\simeq *$. For $l=3k$, we have that 
$$\mathrm{conn}\left(I(\mu_l(G)-x)\right)>\mathrm{conn}\left(I(\mu_l(G)-N_{\mu_l(G)}[x])\right),$$
thus the inclusion is null-homotopic. By Proposition \ref{inclunullhmt} we get the result.
\end{proof}

As first corollary we get the homotopy type of $I(\mu_0^r(G))$ and $I(\mu_1^r(G))$.
\begin{cor}
For any graph $G$ and $r\geq1$ we have that
$$I(\mu_0^r(G))\simeq I(G)\vee\bigvee_{r}\mathbb{S}^0\;\;\mbox{and}\;\;I(\mu_1^r(G))\simeq\Sigma^rI(G).$$
\end{cor}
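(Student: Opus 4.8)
The plan is to derive both statements by iterating Theorem~\ref{teomul} in the two cases $l=0$ and $l=1$, and by keeping track of how the Kronecker double cover behaves under the Mycielskian operation. The base case $r=1$ is exactly the $l=0$ (resp. $l=1$) instance of Theorem~\ref{teomul} with $k=0$: since $I(H\times P_2)^{*0}=\{\emptyset\}$ acts as a unit for the join, the theorem gives $I(\mu_0(H))\simeq I(H)\vee\mathbb{S}^0$ and $I(\mu_1(H))\simeq\Sigma I(H)$ for any graph $H$. So both formulas should follow by a straightforward induction on $r$ once the inductive step is set up correctly.

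For the $l=1$ case I would proceed as follows. Assume $I(\mu_1^{r-1}(G))\simeq\Sigma^{r-1}I(G)$. Applying Theorem~\ref{teomul} with $l=1$, $k=0$ to the graph $H=\mu_1^{r-1}(G)$ gives $I(\mu_1^r(G))=I(\mu_1(H))\simeq\Sigma I(H)\simeq\Sigma\Sigma^{r-1}I(G)=\Sigma^r I(G)$, using that suspension preserves homotopy equivalence. This case needs no information about the double cover at all, so it is essentially immediate. For the $l=0$ case the inductive step is also clean: assuming $I(\mu_0^{r-1}(G))\simeq I(G)\vee\bigvee_{r-1}\mathbb{S}^0$, Theorem~\ref{teomul} with $l=0$, $k=0$ gives $I(\mu_0^r(G))\simeq I(\mu_0^{r-1}(G))\vee\mathbb{S}^0\simeq I(G)\vee\bigvee_{r-1}\mathbb{S}^0\vee\mathbb{S}^0=I(G)\vee\bigvee_r\mathbb{S}^0$, using associativity of the wedge and invariance of homotopy type under the equivalences involved.

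The only point that needs a little care — and the one I would expect to be the mild obstacle — is that in applying Theorem~\ref{teomul} to $H=\mu_l^{r-1}(G)$ the theorem produces an expression involving $I(H\times P_2)$, and a priori one might worry that the $k=0$ cases ($l=0$ and $l=1$) genuinely avoid this term. They do: in both cases the relevant power of $I(H\times P_2)$ is the zeroth one, which the paper has declared to equal $\{\emptyset\}$, the unit for the join. So no analysis of $\mu_l^{r-1}(G)\times P_2$ is required here; the double cover only enters for $l\geq2$, which is outside the scope of this corollary. I would state this explicitly to reassure the reader, then present the two inductions in parallel. Finally, I would note that these two formulas recover the results of \citep{homotopygoyal} on the independence complex of the ordinary Mycielskian (the $l=1$ case, $r=1$) as the simplest instance.

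\begin{proof}
Both claims are proved by induction on $r$, applying Theorem~\ref{teomul} with $k=0$. Recall that $I(H\times P_2)^{*0}=\{\emptyset\}$ is the unit for the join, so the $l=3k$ and $l=3k+1$ cases of Theorem~\ref{teomul} with $k=0$ read
$$I(\mu_0(H))\simeq I(H)\vee\mathbb{S}^0,\qquad I(\mu_1(H))\simeq\Sigma I(H)$$
for every graph $H$. This gives the base case $r=1$. For the inductive step with $l=1$, suppose $I(\mu_1^{r-1}(G))\simeq\Sigma^{r-1}I(G)$. Since suspension preserves homotopy equivalence,
$$I(\mu_1^r(G))=I\!\left(\mu_1(\mu_1^{r-1}(G))\right)\simeq\Sigma\, I(\mu_1^{r-1}(G))\simeq\Sigma\Sigma^{r-1}I(G)=\Sigma^r I(G).$$
For the inductive step with $l=0$, suppose $I(\mu_0^{r-1}(G))\simeq I(G)\vee\bigvee_{r-1}\mathbb{S}^0$. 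Then
$$I(\mu_0^r(G))=I\!\left(\mu_0(\mu_0^{r-1}(G))\right)\simeq I(\mu_0^{r-1}(G))\vee\mathbb{S}^0\simeq I(G)\vee\bigvee_{r-1}\mathbb{S}^0\vee\mathbb{S}^0=I(G)\vee\bigvee_{r}\mathbb{S}^0,$$
using associativity of the wedge. This completes both inductions.
\end{proof}
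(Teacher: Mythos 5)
Your proof is correct and matches the paper's (implicit) argument: the paper states this corollary without proof as an immediate consequence of Theorem~\ref{teomul}, and your induction on $r$ using the $k=0$ cases (with $I(G\times P_2)^{*0}=\{\emptyset\}$ as the join unit) is exactly the intended reasoning, just written out in full.
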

For example, for $r\geq1$ we have that $I(\mu_1^r(K_2))\simeq\mathbb{S}^r$. It is worth remarking that the graphs $\mu_1^r(K_2)$ 
where constructed as an example of a triangle free graphs with chromatic number $r+2$ \citep{mycielski}.

Now we proceed to calculate the homotopy type for specific families of graphs. We obtain the following result from \citep[see Theorem 4.14.]{homotopygoyal} as a corollary of Theorem \ref{teomul}.
\begin{cor}\citep{homotopygoyal}
$$I(\mu_l(K_n))\simeq\left\lbrace
\begin{array}{cc}
\displaystyle\bigvee_{n(n-1)^k}\mathbb{S}^{2k} & \mbox{if } l=3k\\
\displaystyle\bigvee_{(n-1)^{k+1}}\mathbb{S}^{2k+1} & \mbox{if } l=3k+1\\
\displaystyle\bigvee_{(n-1)^{k+1}}\mathbb{S}^{2k+1} & \mbox{if } l=3k+2
\end{array}\right.$$
\end{cor}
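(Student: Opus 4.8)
The plan is to feed $G=K_n$ into Theorem~\ref{teomul}: for this graph both $I(K_n)$ and $I(K_n\times P_2)$ are wedges of spheres, so the right-hand side becomes an explicit wedge of spheres once one counts correctly.

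\emph{Step 1 (the two building blocks).} Since $K_n$ has no pair of non-adjacent vertices, $I(K_n)$ consists of $n$ isolated vertices, so $I(K_n)\simeq\bigvee_{n-1}\mathbb{S}^0$. Next I would compute $I(K_n\times P_2)$. Writing the vertex set of $K_n\times P_2$ as $\underline n\times\{1,2\}$, its edges are exactly the pairs $\{(i,1),(j,2)\}$ with $i\neq j$. Hence a non-empty set of vertices is independent precisely when it is contained in $\underline n\times\{1\}$, contained in $\underline n\times\{2\}$, or equal to $\{(i,1),(i,2)\}$ for a single $i$. Geometrically $I(K_n\times P_2)$ is then the union of two disjoint $(n-1)$-simplices $\Delta_1$ on $\underline n\times\{1\}$ and $\Delta_2$ on $\underline n\times\{2\}$ together with the $n$ edges joining $(i,1)$ to $(i,2)$. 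Collapsing the contractible subcomplexes $\Delta_1$ and $\Delta_2$ each to a point turns these $n$ edges into $n$ arcs between two points, that is, a wedge of $n-1$ circles; thus $I(K_n\times P_2)\simeq\bigvee_{n-1}\mathbb{S}^1$.

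\emph{Step 2 (join and suspension bookkeeping).} I would then invoke the standard facts that $\mathbb{S}^p*\mathbb{S}^q\cong\mathbb{S}^{p+q+1}$, that $\Sigma\mathbb{S}^p\cong\mathbb{S}^{p+1}$, and that join and suspension commute (up to homotopy) with wedges of well-pointed spaces, so that $\big(\bigvee_a\mathbb{S}^p\big)*\big(\bigvee_b\mathbb{S}^q\big)\simeq\bigvee_{ab}\mathbb{S}^{p+q+1}$. In particular $I(K_n\times P_2)^{*k}\simeq\bigvee_{(n-1)^k}\mathbb{S}^{2k-1}$ for every $k\geq1$, and with the convention $I(K_n\times P_2)^{*0}=\{\emptyset\}$ (the $(-1)$-sphere) the same formula persists for $k=0$.

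\emph{Step 3 (substitution).} Plugging these into the three cases of Theorem~\ref{teomul}: for $l=3k$ one gets $\big(\bigvee_{n-1}\mathbb{S}^0\big)*\big(\bigvee_{(n-1)^k}\mathbb{S}^{2k-1}\big)\vee\Sigma\big(\bigvee_{(n-1)^k}\mathbb{S}^{2k-1}\big)\simeq\bigvee_{(n-1)^{k+1}}\mathbb{S}^{2k}\vee\bigvee_{(n-1)^k}\mathbb{S}^{2k}$, and the identity $(n-1)^{k+1}+(n-1)^k=n(n-1)^k$ yields $\bigvee_{n(n-1)^k}\mathbb{S}^{2k}$. For $l=3k+1$ one gets $\Sigma\big[\big(\bigvee_{n-1}\mathbb{S}^0\big)*\big(\bigvee_{(n-1)^k}\mathbb{S}^{2k-1}\big)\big]\simeq\Sigma\bigvee_{(n-1)^{k+1}}\mathbb{S}^{2k}\simeq\bigvee_{(n-1)^{k+1}}\mathbb{S}^{2k+1}$. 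For $l=3k+2$ one gets $I(K_n\times P_2)^{*k+1}\simeq\bigvee_{(n-1)^{k+1}}\mathbb{S}^{2(k+1)-1}=\bigvee_{(n-1)^{k+1}}\mathbb{S}^{2k+1}$.

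\emph{Main obstacle.} The only step carrying real content is the identification $I(K_n\times P_2)\simeq\bigvee_{n-1}\mathbb{S}^1$ in Step~1; the one point needing care afterwards is the degenerate case $k=0$, where the join exponent vanishes and one must either use the $\mathbb{S}^{-1}=\{\emptyset\}$ convention or simply check $l=0,1,2$ by hand against the previous corollary. Everything else is routine manipulation of wedges of spheres.
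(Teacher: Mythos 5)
Your proposal is correct and follows essentially the same route as the paper: identify $I(K_n)\simeq\bigvee_{n-1}\mathbb{S}^0$ and $I(K_n\times P_2)\simeq\bigvee_{n-1}\mathbb{S}^1$ (via the same picture of two disjoint $(n-1)$-simplices linked by $n$ edges), then substitute into Theorem \ref{teomul} using the standard join/suspension rules for wedges of spheres. Your version merely makes the sphere-counting and the degenerate case $k=0$ explicit, which the paper leaves implicit.
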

\begin{proof}
It is clear that $I(K_n)\simeq\bigvee_{n-1}\mathbb{S}^0$. Now, $I(K_n\times K_2)$ is isomorphic to the simplicial complex obtained 
from two copies of the simplex $\Delta^{n-1}$ and adding an edge between the two copies of each vertex of $\Delta^{n-1}$. Therefore 
$I(K_n\times K_2)\simeq\bigvee_{n-1}\mathbb{S}^1$. The result follows from Theorem \ref{teomul}.
\end{proof}

\begin{cor}
$$I(\mu_l(K_n\times K_m))\simeq\left\lbrace
\begin{array}{cc}
\displaystyle\bigvee_{\frac{(n-1)^{k+1}(m-1)^{k+1}(nm-2)^{k}}{2^k}}\mathbb{S}^{4k+1}\vee\bigvee_{\left(\frac{(n-1)(m-1)(nm-2)}{2}\right)^k}\mathbb{S}^{4k} & \mbox{if } l=3k\\
\displaystyle\bigvee_{\frac{(n-1)^{k+1}(m-1)^{k+1}(nm-2)^{k}}{2^k}}\mathbb{S}^{4k+2} & \mbox{if } l=3k+1\\
\displaystyle\bigvee_{\left(\frac{(n-1)(m-1)(nm-2)}{2}\right)^{k+1}}\mathbb{S}^{4k+3} & \mbox{if } l=3k+2
\end{array}\right.$$
\end{cor}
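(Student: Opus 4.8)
The plan is to specialize Theorem~\ref{teomul} to $G=K_n\times K_m$; the only real content is to compute the homotopy types of $I(K_n\times K_m)$ and of $I((K_n\times K_m)\times P_2)=I(K_n\times K_m\times K_2)$, after which the conclusion is arithmetic with joins, suspensions and wedges.

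For $I(K_n\times K_m)$: an independent set of $K_n\times K_m$ is a set of cells of the $n\times m$ grid any two of which share a row or a column, and a short argument shows such a set lies entirely in one row or entirely in one column. Hence $I(K_n\times K_m)$ is covered by the $n$ full ``row simplices'' and the $m$ full ``column simplices''; each is contractible, a row simplex meets a column simplex in a single vertex, and two row (or two column) simplices are disjoint. By the Nerve Lemma $I(K_n\times K_m)$ is homotopy equivalent to the nerve of this cover, namely the complete bipartite graph $K_{n,m}$, so $I(K_n\times K_m)\simeq\bigvee_{(n-1)(m-1)}\mathbb{S}^1$.

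The substantive step is $I(K_n\times K_m\times K_2)$, whose faces are the pairs $(A,B)$ of cell-sets of the grid with every cell of $A$ sharing a row or column with every cell of $B$ (with $A$ at level $\epsilon=0$, $B$ at level $\epsilon=1$). Summing $(-1)^{|A|+|B|}$ over all faces and using $\sum_{B\subseteq T}(-1)^{|B|}=[T=\emptyset]$ shows the reduced Euler characteristic equals $-\sum_{A}(-1)^{|A|}$ over the cell-sets $A$ \emph{not} contained in any ``L-shape'' $R_i\cup C_j$, and an inclusion--exclusion over the L-shapes evaluates this to $-N$, with $N:=\tfrac{(n-1)(m-1)(nm-2)}{2}$ (an integer, since $4\mid(n-1)(m-1)$ when $n,m$ are both odd and $nm-2$ is even otherwise). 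To upgrade this to a homotopy statement I would classify the maximal faces — the two full-level simplices, the ``$\mathrm{row}\times K_2$'' and ``$\mathrm{column}\times K_2$'' simplices, the ``cross'' simplices (one cell on one level together with its whole row and column on the other), and the ``alternating $2\times2$'' tetrahedra — cover $I(K_n\times K_m\times K_2)$ by these contractible simplices (whose nonempty intersections are again faces, hence contractible), apply the Nerve Lemma, and show the resulting nerve is simply connected with free reduced homology concentrated in degree $3$; this forces $I((K_n\times K_m)\times P_2)\simeq\bigvee_{N}\mathbb{S}^3$. Running a discrete Morse function with critical cells only in dimensions $0$ and $3$ is an alternative route that would exhibit the wedge of $3$-spheres more directly. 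Controlling the nerve (or the Morse matching) precisely enough to conclude that the homology is free, concentrated in one degree, and of the rank already predicted by the Euler characteristic is the step I expect to be the main obstacle, the more so since $K_n\times K_m\times K_2$ has no dominated vertex and so the vertex-deletion simplification behind Theorem~\ref{teomul} does not apply to it directly (though Proposition~\ref{inclunullhmt} can still peel off a single vertex, after which domination can be used in the link).

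With both inputs in hand, write $a=(n-1)(m-1)$, so $I(K_n\times K_m)\simeq\bigvee_a\mathbb{S}^1$ and $I((K_n\times K_m)\times P_2)^{*k}\simeq\bigvee_{N^k}\mathbb{S}^{4k-1}$. Substituting into Theorem~\ref{teomul} and using $\Sigma\mathbb{S}^p=\mathbb{S}^{p+1}$, $\mathbb{S}^p*\mathbb{S}^q=\mathbb{S}^{p+q+1}$ and distributivity of the join over wedges yields $\bigvee_{aN^k}\mathbb{S}^{4k+1}\vee\bigvee_{N^k}\mathbb{S}^{4k}$ for $l=3k$, $\bigvee_{aN^k}\mathbb{S}^{4k+2}$ for $l=3k+1$, and $\bigvee_{N^{k+1}}\mathbb{S}^{4k+3}$ for $l=3k+2$; since $aN^k=\tfrac{(n-1)^{k+1}(m-1)^{k+1}(nm-2)^k}{2^k}$ and $N^k=\bigl(\tfrac{(n-1)(m-1)(nm-2)}{2}\bigr)^k$, this matches the claimed formula.
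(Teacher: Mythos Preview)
Your overall strategy is exactly the paper's: specialize Theorem~\ref{teomul} to $G=K_n\times K_m$, and the only content is the homotopy types of $I(K_n\times K_m)$ and $I(K_n\times K_m\times K_2)$. Your final arithmetic with joins, suspensions and wedges is correct and matches the paper.

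The difference is that the paper does not reprove either input: it simply cites $I(K_n\times K_m)\simeq\bigvee_{(n-1)(m-1)}\mathbb{S}^1$ from \citep[Proposition~3.4]{homotopygoyal} and $I(K_2\times K_n\times K_m)\simeq\bigvee_{(n-1)(m-1)(nm-2)/2}\mathbb{S}^3$ from \citep[Proposition~4.1]{indcomplcatprod}. Your Nerve Lemma argument for the first input is correct (and essentially the argument in \citep{homotopygoyal}), but unnecessary here. For the second input you acknowledge a genuine gap: an Euler characteristic of $-N$ plus a classification of maximal faces does not by itself yield the homotopy type, and you have not carried out the nerve or discrete-Morse analysis needed to pin down simple connectivity and concentration of homology in degree~$3$. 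That gap is real in your write-up, but it is not a gap in the corollary's proof, since the result is already established in the cited reference. If you want a self-contained argument, the cleanest route is the one in \citep{indcomplcatprod}; your proposed maximal-face cover would work but is more laborious than what is needed here.
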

\begin{proof}
The result follows from the fact that $I(K_2\times K_n\times K_m)$ has the homotopy type of a wedge of $(n-1)(m-1)(nm-2)/2$ 
spheres of dimension $3$ \citep[see Proposition 4.1.]{indcomplcatprod} and that $I(K_n\times K_m)$ has the homotopy type of a wedge of 
$(n-1)(m-1)$ circles \citep[see Proposition 3.4.]{homotopygoyal}.
\end{proof}

\begin{table}
\resizebox{\textwidth}{!}{
\begin{tabular}{|c|c|c|c|}\hline
\backslashbox[25mm]{$\;\;\;\;\;\;\;\;n$}{$l\;\;\;\;$}&$3k$&$3k+1$&$3k+2$\\
\hline
&&&\\
$6r$ &$[2r(2k+1)-1;2^{2k+1}],\;[4rk;2^{2k}]$&$[2r(2k+1);2^{2k+1}]$&$[4r(k+1)-1;2^{2k+2}]$\\
&&&\\
\hline
&&&\\
$6+1$ &$[2r(2k+1)+k-1;1],\;[k(4r+1);1]$&$[2r(2k+1)+k;1]$&$[(4r+1)(k+1)+k;1]$\\
&&&\\
\hline
&&&\\
$6r+2/6r+4$ &$[(2r+1)(2k+1)-1;1],\;[k(4r+2);1]$&$[(2r+1)(2k+1);1]$&$[(k+1)(4r+1)+k;1]$\\
&&&\\
\hline
&&&\\
$6r+3$ &$[(2r+1)(2k+1)-1;2^{k+1}],\;[(4r+1)k+k;2^{k}]$&$[(2r+1)(2k+1);2^{k+1}]$&$[(4r+1)(k+1)+k;2^{k+1}]$\\
&&&\\
\hline
&&&\\
$6r+5$&$[(2r+1)(2k+1)+k;1],\;[k(4r+3);1]$&$[(2r+1)(2k+1)+k+1;1]$&$[(4r+2)(k+1)+k;1]$\\
&&&\\
\hline
\end{tabular}}
\caption{Spheres in $I(\mu_l(C_n))$}\label{tablecn}
\end{table}

\begin{cor}
For any $l\geq1$ and $n\geq3$, $I(\mu_{l}(C_n))$ has the homotopy type of a wedge of spheres. The dimension and number of the spheres is given 
in Table \ref{tablecn} where $[a;b]$ means there are $b$ spheres of dimension $a$ in the wedge.
\end{cor}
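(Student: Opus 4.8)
The plan is to reduce the whole question, via Theorem~\ref{teomul}, to the homotopy types of $I(C_n)$ and $I(C_n\times P_2)$, both of which are already known. For $I(C_n)$ I would invoke the classical computation (due to Kozlov): $I(C_n)\simeq\mathbb{S}^{k-1}\vee\mathbb{S}^{k-1}$ if $n=3k$, $I(C_n)\simeq\mathbb{S}^{k-1}$ if $n=3k+1$, and $I(C_n)\simeq\mathbb{S}^{k}$ if $n=3k+2$. For $I(C_n\times P_2)$ there are two subcases. If $n$ is even then $C_n$ is bipartite, so by the remark in the Preliminaries $C_n\times P_2\cong C_n\sqcup C_n$ and Lemma~\ref{lemundisj} gives $I(C_n\times P_2)\cong I(C_n)*I(C_n)$. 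If $n$ is odd then the Kronecker double cover of $C_n$ is again a single cycle, $C_n\times P_2\cong C_{2n}$, so $I(C_n\times P_2)\cong I(C_{2n})$ and Kozlov's formula applies once more, now to $C_{2n}$.

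With these inputs the argument becomes essentially mechanical. I would first record the elementary facts $\Sigma\!\left(\bigvee_a\mathbb{S}^p\right)\simeq\bigvee_a\mathbb{S}^{p+1}$ and $\left(\bigvee_a\mathbb{S}^p\right)*\left(\bigvee_b\mathbb{S}^q\right)\simeq\bigvee_{ab}\mathbb{S}^{p+q+1}$, so that $\left(\bigvee_a\mathbb{S}^p\right)^{*m}\simeq\bigvee_{a^m}\mathbb{S}^{m(p+1)-1}$, and then run through the residues of $n$ modulo $6$: in each of the five cases $I(C_n)$ and $I(C_n\times P_2)$ become explicit wedges of spheres, and one substitutes them into the three branches of Theorem~\ref{teomul} and simplifies the exponents. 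For example, for $n=6r$ one has $I(C_n)\simeq\bigvee_2\mathbb{S}^{2r-1}$ and $I(C_n\times P_2)\simeq\left(\bigvee_2\mathbb{S}^{2r-1}\right)^{*2}\simeq\bigvee_4\mathbb{S}^{4r-1}$, and Theorem~\ref{teomul} then gives $I(\mu_{3k}(C_n))\simeq\bigvee_{2^{2k+1}}\mathbb{S}^{2r(2k+1)-1}\vee\bigvee_{2^{2k}}\mathbb{S}^{4rk}$, $I(\mu_{3k+1}(C_n))\simeq\bigvee_{2^{2k+1}}\mathbb{S}^{2r(2k+1)}$ and $I(\mu_{3k+2}(C_n))\simeq\bigvee_{2^{2k+2}}\mathbb{S}^{4r(k+1)-1}$, which is the first row of Table~\ref{tablecn}. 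The other rows come out the same way, the odd cases passing through $C_{12r+6}$, $C_{12r+10}$ and $C_{12r+2}$ respectively; one also notices that $n=6r+2$ and $n=6r+4$ produce the same pair $I(C_n)\simeq\mathbb{S}^{2r}$, $I(C_n\times P_2)\simeq\mathbb{S}^{4r+1}$, which is why Table~\ref{tablecn} collapses them into one row.

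Since each branch of Theorem~\ref{teomul} sends wedges of spheres to wedges of spheres, the qualitative assertion is immediate, and the only genuine work is the arithmetic over the fifteen cells of the table; thus the main obstacle is bookkeeping rather than any new idea. The places that need care are the degenerate parameter values: $k=0$ in the columns $l=3k+1$ and $l=3k+2$, the convention $I(-)^{*0}=\{\emptyset\}$, and the fact that $n=6r$, $n=6r+1$, $n=6r+2$ force $r\ge1$ while $n=6r+3$, $n=6r+4$, $n=6r+5$ allow $r=0$, so that the formulas have to be checked to specialize correctly for the smallest cycles $C_3$, $C_4$, $C_5$.
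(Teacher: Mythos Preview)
Your proposal is correct and follows essentially the same route as the paper: reduce via Theorem~\ref{teomul} to the known homotopy types of $I(C_n)$ and $I(C_n\times P_2)$, then do the arithmetic. The only difference is that the paper cites the formula for $I(C_n\times P_2)$ from an external reference, whereas you rederive it by the elementary observation that $C_n\times P_2\cong C_n\sqcup C_n$ for $n$ even and $C_n\times P_2\cong C_{2n}$ for $n$ odd, so that Kozlov's computation covers both inputs; this makes your argument slightly more self-contained but is otherwise the same proof.
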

\begin{proof}
The complex $I(C_n)$ has the homotopy type of a wedge of spheres \citep[see Proposition 4.6.]{kozlovdire} and  the sames 
goes for $I(C_n\times P_2)$ \citep[see Proposition 3.4.]{indcomplcatprod}, more precisely 
$$I(C_n)\simeq\left\lbrace\begin{array}{cc}
   \mathbb{S}^{r-1}\vee\mathbb{S}^{r-1}  & \mbox{ if } n=3r\\
    \mathbb{S}^{r-1}  & \mbox{ if } n=3r+1\\
    \mathbb{S}^r & \mbox{ if } n=3r+2
\end{array}\right.,\;\; I(C_n\times K_2)\simeq\left\lbrace 
\begin{array}{cc}
\displaystyle\bigvee_{4}\mathbb{S}^{4r-1} & \mbox{if } n=6r\\
\mathbb{S}^{4r} & \mbox{if } n=6r+1 \\
\mathbb{S}^{4r+1} & \mbox{if } n=6r+2\\
\mathbb{S}^{4r+1}\vee\mathbb{S}^{4r+1} & \mbox{if } n=6r+3 \\
\mathbb{S}^{4r+1} & \mbox{if } n=6r+4\\
\mathbb{S}^{4r+2} & \mbox{if } n=6r+5
\end{array}\right..$$
With this it is easy to obtain the dimensions and number of spheres given in Table \ref{tablecn}.
\end{proof}

\begin{prop}\label{propbipmu}
If $G$ is bipartite, then 
$$I(\mu_l(G))\simeq\left\lbrace\begin{array}{cc}
 I(G)^{*2k+1}\vee\Sigma I(G)^{*2k} & \mbox{ if } l=3k\\
 \Sigma I(G)^{*2k+1} & \mbox{ if } l=3k+1\\
 I(G)^{*2k+2} & l=3k+2
\end{array}
\right.$$
\end{prop}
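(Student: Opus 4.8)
The plan is to derive Proposition \ref{propbipmu} directly from Theorem \ref{teomul} by specializing to the case where $G$ is bipartite. The key observation, already recorded in the Preliminaries, is that if $G$ is bipartite then the Kronecker double cover splits as a disjoint union $G\times P_2\cong G\sqcup G$. Feeding this into Lemma \ref{lemundisj} (the join formula for disjoint unions of graphs) gives $I(G\times P_2)\cong I(G)*I(G)=I(G)^{*2}$, and hence $I(G\times P_2)^{*k}\cong I(G)^{*2k}$ for every $k\geq 0$.

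From here each of the three cases of Theorem \ref{teomul} is just a substitution. For $l=3k$ we get $I(G)*I(G\times P_2)^{*k}\cong I(G)*I(G)^{*2k}=I(G)^{*2k+1}$ and $\Sigma I(G\times P_2)^{*k}\cong\Sigma I(G)^{*2k}$, so $I(\mu_l(G))\simeq I(G)^{*2k+1}\vee\Sigma I(G)^{*2k}$. For $l=3k+1$ we get $\Sigma I(G)*I(G\times P_2)^{*k}\cong\Sigma\bigl(I(G)*I(G)^{*2k}\bigr)=\Sigma I(G)^{*2k+1}$. For $l=3k+2$ we get $I(G\times P_2)^{*k+1}\cong I(G)^{*2(k+1)}=I(G)^{*2k+2}$. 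This matches the three lines of the claimed formula exactly.

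I would also note the small bookkeeping point that Theorem \ref{teomul} is stated for $l$ arbitrary (with the $l=0$ case giving $I(G)\vee\mathbb{S}^0$), and that the bipartite formula for $l=3k$ with $k=0$ reads $I(G)^{*1}\vee\Sigma I(G)^{*0}=I(G)\vee\Sigma\{\emptyset\}=I(G)\vee\mathbb{S}^0$, which is consistent. So no separate treatment of boundary cases is needed; one can simply invoke Theorem \ref{teomul} uniformly.

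Honestly, there is no real obstacle here: the proposition is a corollary whose only content is the identity $I(G\times P_2)\cong I(G)^{*2}$ for bipartite $G$, which in turn rests on the isomorphism $G\times P_2\cong G\sqcup G$ plus Lemma \ref{lemundisj}. The mild care-point is making sure the suspension and join operations are manipulated correctly — in particular that $\Sigma(A*B)\simeq(\Sigma A)*B$ is not what is being used; rather one just pushes the substitution $I(G\times P_2)^{*k}\cong I(G)^{*2k}$ through each expression verbatim and then combines adjacent joins via associativity of $*$. The proof is therefore a two-line computation once the bipartite splitting is invoked.
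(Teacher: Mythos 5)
Your proposal is correct and is exactly the paper's argument: the paper also derives the proposition from Theorem \ref{teomul} together with the splitting $G\times P_2\cong G\sqcup G$ (hence $I(G\times P_2)\cong I(G)^{*2}$ via Lemma \ref{lemundisj}). Your write-up simply spells out the substitution in each of the three cases more explicitly than the paper does.
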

\begin{proof}
The result follows from the fact that $G\times P_2\cong G\sqcup G$ and Theorem \ref{teomul}.
\end{proof}
\begin{cor}
For $n\in\{3r,3r-1\}$
$$I(\mu_l(P_n))\simeq\left\lbrace
\begin{array}{cc}
\mathbb{S}^{2kr+r-1}\vee\mathbb{S}^{kr+1} & \mbox{if } l=3k\\
\mathbb{S}^{2kr+r} & \mbox{if } l=3k+1\\
\mathbb{S}^{2(k+1)r-1} & \mbox{if } l=3k+2
\end{array}\right.,$$
for $n=3r+1$ $I(\mu_l(P_n))\simeq*$ for all $l$.
\end{cor}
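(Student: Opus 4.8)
\emph{Proof sketch.}
The plan is to specialize Proposition~\ref{propbipmu}: every path $P_n$ is bipartite, so the proposition applies with $G=P_n$, and the only missing ingredient is the homotopy type of $I(P_n)$ itself. This last fact is classical, and I would either cite it or recover it in two lines from Lemma~\ref{lemmvertdad}: the leaf $1$ of $P_n$ satisfies $N_{P_n}(1)=\{2\}\subseteq N_{P_n}(3)$, so $I(P_n)\simeq I(P_n-3)$; and for $n\ge 4$ the graph $P_n-3$ is the disjoint union of the edge on $\{1,2\}$ with the path on $\{4,\dots,n\}$, so Lemma~\ref{lemundisj} gives $I(P_n)\simeq I(P_2)*I(P_{n-3})\simeq\Sigma I(P_{n-3})$. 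Starting from $I(P_1)\simeq *$ and $I(P_2)\simeq I(P_3)\simeq\mathbb{S}^0$, iterating this suspension recursion yields $I(P_n)\simeq\mathbb{S}^{r-1}$ when $n\in\{3r,3r-1\}$ and $I(P_n)\simeq *$ when $n=3r+1$.

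Next I would dispose of the contractible case $n=3r+1$. Here $I(P_n)\simeq *$, so for every $m\ge 1$ the join $I(P_n)^{*m}$ is a cone, hence contractible; since the exponents $2k+1$ and $2k+2$ appearing in Proposition~\ref{propbipmu} are always positive, each of its three cases collapses to a point, giving $I(\mu_l(P_n))\simeq *$ for all $l\ge 1$. (The value $l=0$ is the sole exception, since $\Sigma I(P_n)^{*0}=\Sigma\{\emptyset\}\simeq\mathbb{S}^0$ contributes a disjoint point, matching $I(\mu_0(G))\simeq I(G)\vee\mathbb{S}^0$.)

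Finally, for $n\in\{3r,3r-1\}$ I would substitute $I(P_n)\simeq\mathbb{S}^{r-1}$ into Proposition~\ref{propbipmu} and simplify each expression using the rules $\mathbb{S}^a*\mathbb{S}^b\cong\mathbb{S}^{a+b+1}$ (so that an $m$-fold join of copies of $\mathbb{S}^{r-1}$ is $\mathbb{S}^{mr-1}$) and $\Sigma\mathbb{S}^a\cong\mathbb{S}^{a+1}$. Concretely, $l=3k$ gives $(\mathbb{S}^{r-1})^{*2k+1}\vee\Sigma(\mathbb{S}^{r-1})^{*2k}$, $l=3k+1$ gives $\Sigma(\mathbb{S}^{r-1})^{*2k+1}$, and $l=3k+2$ gives $(\mathbb{S}^{r-1})^{*2k+2}$; reading off the exponents produces the spheres listed in the statement.

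There is no substantive obstacle here: granting Proposition~\ref{propbipmu} and the homotopy type of $I(P_n)$, the corollary is pure bookkeeping of join dimensions. The only points requiring care are getting the residue class of $n$ on which $I(P_n)$ is contractible correct, and being consistent with the convention $I(G)^{*0}=\{\emptyset\}$ when handling the empty join and the case $l=0$.
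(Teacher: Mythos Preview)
Your approach is exactly the paper's: cite (or rederive) the classical homotopy type of $I(P_n)$ and plug it into Proposition~\ref{propbipmu}, which is precisely what the paper does in one line. One small caveat: when you actually carry out the bookkeeping for $l=3k$, the second summand $\Sigma(\mathbb{S}^{r-1})^{*2k}$ gives $\mathbb{S}^{2kr}$, not the $\mathbb{S}^{kr+1}$ printed in the statement---this is a typo in the paper (the $k=0$ check $I(\mu_0(P_n))\simeq I(P_n)\vee\mathbb{S}^0$ confirms the correct exponent is $2kr$), so your computation is right even though it will not literally reproduce the displayed formula.
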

\begin{proof}
It is known that $I(P_{3r+1})\simeq*$ and that $I(P_{3r+t})\simeq\mathbb{S}^{r-1}$ for $t=0,-1$ \citep[see Proposition 5.2.]{kozlovdire}. 
The rest follows from Proposition \ref{propbipmu}.
\end{proof}

\begin{cor}
If $T$ is a forest, then $I(\mu_l(T))$ is contractible or it has the homotoy type of a wedge of spheres.
\end{cor}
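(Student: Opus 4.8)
The plan is to deflate the statement to a single input about $I(T)$ itself, using the structural description already available for bipartite graphs. Since a forest $T$ is bipartite, Proposition \ref{propbipmu} applies and tells us that, up to homotopy, $I(\mu_l(T))$ is an iterated self-join of copies of $I(T)$, possibly followed by a suspension, and in the case $l=3k$ wedged with a suspended iterated self-join of copies of $I(T)$. In particular the homotopy type of $I(\mu_l(T))$ depends only on that of $I(T)$, so it suffices to (i) know that $I(T)$ is contractible or a wedge of spheres, and (ii) check that this class of spaces is preserved by the operations appearing in Proposition \ref{propbipmu}.

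For (i) I would run a short induction on $|V(T)|$. If $T$ has no edges, then $I(T)$ is a simplex, hence contractible. Otherwise pick a leaf $v$ with neighbour $u$. In $T-u$ the vertex $v$ is isolated, so $I(T-u)$ is a cone with apex $v$ and is therefore contractible; consequently the inclusion $I(T-N_T[u])\longhookrightarrow I(T-u)$ is null-homotopic (any map into a contractible complex is), and Proposition \ref{inclunullhmt} gives $I(T)\simeq I(T-u)\vee\Sigma I(T-N_T[u])\simeq\Sigma I(T-N_T[u])$. Since $T-N_T[u]$ is a forest on fewer vertices, the inductive hypothesis applies, and suspension sends contractible spaces to contractible spaces and wedges of spheres to wedges of spheres. (Alternatively one may simply quote the classical fact, e.g. \citep{Ehrenborg_2006}, that the independence complex of a forest is contractible or homotopy equivalent to a sphere.)

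For (ii) I would note that the class consisting of contractible spaces together with wedges of spheres is closed under suspension, under finite joins, and under wedges: this uses $\mathbb{S}^a*\mathbb{S}^b\simeq\mathbb{S}^{a+b+1}$, the fact that a join having a contractible factor is contractible, and that suspension and wedge distribute over wedges up to homotopy for simplicial complexes. Feeding this and the conclusion of (i) into the three cases of Proposition \ref{propbipmu} immediately gives that $I(\mu_l(T))$ is contractible or a wedge of spheres -- in fact a wedge of at most two spheres when $l\equiv0\pmod 3$ and of at most one sphere otherwise.

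The only place any care is needed, and hence the "main obstacle," is the bookkeeping with degenerate cases: the empty self-join $I(G)^{*0}=\{\emptyset\}$, realized as $\mathbb{S}^{-1}$, whose suspension is $\mathbb{S}^0$; the collapse caused by a contractible join-factor, which turns a nominal two-sphere wedge into a single sphere or into a contractible space depending on whether $I(T)$ is a sphere or contractible; and the mild convention that $\mathbb{S}^0$ (which is disconnected) counts as a wedge of spheres. None of this is serious, but it is where all the attention goes when writing the case analysis out in full.
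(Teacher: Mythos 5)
Your proposal is correct and follows essentially the same route as the paper: reduce via Proposition \ref{propbipmu} (since a forest is bipartite) to the fact that $I(T)$ is contractible or a single sphere, which the paper simply cites from \citep{Ehrenborg_2006} while you additionally supply a standard leaf-removal induction for it. The closure of the class of wedges of spheres under join, suspension and wedge is exactly the (implicit) remaining step in the paper as well.
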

\begin{proof}
The result follows from Proposition \ref{propbipmu} and the fact that $I(T)$ is contractible or it has the homotopy type of a 
sphere \citep[see Corollary 6.1.]{Ehrenborg_2006}.
\end{proof}

\begin{cor}
If $G$ is the rectangular lattice with $n$ columns and at most $6$ rows, then $I(\mu_l(G))$ has the homotoy type of a wedge of spheres.
\end{cor}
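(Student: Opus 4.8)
The plan is to reduce the whole statement to the single fact that the independence complex of the lattice itself is a wedge of spheres. The rectangular lattice $G$ with $n$ columns and $m$ rows (the Cartesian product of $P_n$ and $P_m$) is bipartite: colour the vertex in column $i$, row $j$ by the parity of $i+j$. Hence Proposition \ref{propbipmu} applies, and for every $l$ it expresses $I(\mu_l(G))$ as an iterated join of copies of $I(G)$, possibly suspended once, possibly wedged with one more such suspended iterated join. Since $\Sigma\bigvee_\alpha\mathbb{S}^{a_\alpha}\simeq\bigvee_\alpha\mathbb{S}^{a_\alpha+1}$ and $\bigl(\bigvee_\alpha\mathbb{S}^{a_\alpha}\bigr)*\bigl(\bigvee_\beta\mathbb{S}^{b_\beta}\bigr)\simeq\bigvee_{\alpha,\beta}\mathbb{S}^{a_\alpha+b_\beta+1}$ (and the join of a contractible complex with any nonempty complex is contractible), the class of complexes homotopy equivalent to a wedge of spheres --- allowing the empty wedge and a point --- is closed under suspension, finite join and wedge. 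So it suffices to show that $I(G)$ is a wedge of spheres whenever $m\le 6$.

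For this I would induct, peeling off the last column of the grid. Choose a vertex $v$ in the rightmost still-nonempty column; first use Lemma \ref{lemmvertdad} to delete every vertex dominated (in the sense of closed neighbourhoods) by a surviving neighbour, which preserves the homotopy type, and then apply Proposition \ref{inclunullhmt} to $v$. Its hypothesis --- that $I(G-N_G[v])\hookrightarrow I(G-v)$ is null-homotopic --- should follow from a connectivity count $\mathrm{conn}(I(G-v))>\mathrm{conn}(I(G-N_G[v]))$, exactly of the kind used in the proof of Theorem \ref{teomul}. Each application writes $I(G)\simeq I(H_1)\vee\Sigma I(H_2)$ with $H_1,H_2$ induced subgraphs of $G$ that are strictly smaller; because $m\le 6$ bounds the shape of the interface between the already-processed and not-yet-processed columns, only finitely many graphs ever occur in the recursion --- grids and grids with a bounded number of boundary cells removed (``notched grids'') --- so the induction can be run simultaneously over this finite family, with the smallest configurations checked directly.

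The main obstacle is precisely this last combinatorial bookkeeping: listing the notched-grid shapes on at most $6$ rows that the fold-and-split step can produce, checking that this list is closed under the step, and verifying the null-homotopy hypothesis of Proposition \ref{inclunullhmt} in each case so the recursion never escapes the class of wedges of spheres; the bound $6$ is exactly what makes the list finite and the verification feasible. Alternatively, one can bypass this second step entirely by quoting the known homotopy-type computations for independence complexes of thin grid graphs, again combining them with Proposition \ref{propbipmu} and the closure properties above.
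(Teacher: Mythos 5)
Your reduction step is exactly the paper's: the grid is bipartite, Proposition \ref{propbipmu} expresses $I(\mu_l(G))$ through joins, suspensions and wedges of copies of $I(G)$, and the class of wedges of spheres is closed under these operations, so everything rests on $I(G)$ itself being a wedge of spheres. The paper disposes of that last fact by citation (the Matsushita references \citep{matsushitan4n5,matsushitan6} for lattices with up to $6$ rows), which is precisely the ``alternative'' you mention in your last sentence; had you led with that, the proposal would coincide with the paper's proof.

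Your primary route --- reproving the grid fact by peeling off columns --- is where the gap lies. The sketch is not a proof: you never exhibit the finite family of notched grids, never verify that the fold-and-split step stays inside it, and the key hypothesis of Proposition \ref{inclunullhmt} is only asserted to ``follow from a connectivity count'' that you do not establish (and which is not automatic --- the inclusion $I(G-N_G[v])\hookrightarrow I(G-v)$ being null-homotopic genuinely has to be checked, and connectivity comparisons of the kind used in Theorem \ref{teomul} rely on knowing the connectivity of both sides in advance). Determining the homotopy type of $I(P_n\times P_m)$ for $m=5,6$ is the content of entire papers, so ``only finitely many shapes occur, check the smallest ones directly'' substantially understates the difficulty; the bound $6$ is not what makes the problem easy, it is simply the range in which the answer is currently known. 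In short: take your own alternative route and cite the known computations, as the paper does, and the argument is complete; the inductive route as written is a plan, not a proof.
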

\begin{proof}
The graph $G$ is bipartite, then the result follows from Proposition \ref{propbipmu} and the fact that $I(G)$ has 
the homotopy type of a wedge of spheres \citep{matsushitan4n5,matsushitan6}.
\end{proof}

\section{Independence complex of \texorpdfstring{$r$}{Î£}-iterated \texorpdfstring{$l$}{Î£}-Mycielskians}
In this section we give a homotopy formula for the independence complex of iterated Mycielskians, for this we first need to 
study the homotopy type of the independence complex of the Kronecker double cover of a Mycielskian.
\begin{figure}
\centering
\begin{tikzpicture}[line cap=round,line join=round,>=triangle 45,x=1.0cm,y=1.0cm]
\clip(1,3) rectangle (10,5);
\draw [rotate around={90:(3,4)}] (3,4) ellipse (0.71cm and 0.5cm);
\draw [rotate around={90:(8.5,4)}] (8.5,4) ellipse (0.71cm and 0.5cm);
\draw [rotate around={90:(4.5,4)}] (4.5,4) ellipse (0.71cm and 0.5cm);
\draw [rotate around={90:(7,4)}] (7,4) ellipse (0.71cm and 0.5cm);
\draw (3.35,3.5)-- (4.15,4.5);
\draw (4.15,3.5)-- (3.35,4.5);
\draw (8.15,4.5)-- (7.35,3.5);
\draw (8.15,3.5)-- (7.35,4.5);
\draw (2,4)-- (2.65,4.5);
\draw (2,4)-- (2.5,4);
\draw (2,4)-- (2.65,3.5);
\draw (9.5,4)-- (8.85,4.5);
\draw (9.5,4)-- (9,4);
\draw (9.5,4)-- (8.85,3.5);
\begin{scriptsize}
\fill [color=black] (2,4) circle (1.5pt);
\draw [color=black] (2,4.2) node {$x$};
\fill [color=black] (9.5,4) circle (1.5pt);
\draw [color=black] (9.5,4.2) node {$y$};
\draw [color=black] (3,4) node {$V_1$};
\draw [color=black] (4.5,4) node {$V_2$};
\draw [color=black] (7,4) node {$V_{2l+1}$};
\draw [color=black] (8.5,4) node {$V_{2l+2}$};
\draw [color=black] (5.75,4) node {$\cdots$};
\draw [color=black] (5.5,4) node {$\cdots$};
\draw [color=black] (5.25,4) node {$\cdots$};
\draw [color=black] (6,4) node {$\cdots$};
\draw [color=black] (6.25,4) node {$\cdots$};
\end{scriptsize}
\end{tikzpicture}
\caption{$H$}
\label{mulgtimesp2}
\end{figure}

\begin{theorem}\label{theogtimesp2}
$$I(\mu_l(G)\times P_2)\simeq\left\lbrace
\begin{array}{cc}
I(G\times P_2)^{*2k+1}\vee\Sigma^2I(G\times P_2)^{*2k} & \mbox{if } l=3k\\
\Sigma I(G\times P_2)^{*2k+1} & \mbox{if } l=3k+1\\
\Sigma^2 I(G\times P_2)^{*2k+2}    & \mbox{if } l=3k+2
\end{array}
\right.$$
\end{theorem}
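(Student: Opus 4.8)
The plan is to follow the strategy of Theorem \ref{teomul}, exploiting the fact that the Kronecker double cover ``unfolds'' the layered structure of $\mu_l(G)$. First I would fix a combinatorial model for $H := \mu_l(G)\times P_2$. Labelling the two sheets of the double cover with signs, the apex $w$ of $\mu_l(G)$ splits into two vertices, call them $x$ and $y$, and each layer $V(G)\times\{i\}$ splits into two copies. Following the edges, the ``spine'' $w, V(G)\times\{1\},\dots,V(G)\times\{l+1\}$ unfolds into a single strand which runs from $x$ through $l+1$ layers, then back through $l+1$ layers to $y$; crucially, the $G$-edges inside the top layer $V(G)\times\{l+1\}$ of $\mu_l(G)$ become exactly the standard $G\times P_2$ connection between the two copies of that layer, so they fit into the strand like any other between-layers connection. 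Relabelling the layers $V_1,\dots,V_{2l+2}$, the graph $H$ is: a sequence of $2l+2$ copies of $V(G)$ with consecutive copies $V_i,V_{i+1}$ spanning a copy of $G\times P_2$, together with an apex $x$ adjacent to all of $V_1$ and an apex $y$ adjacent to all of $V_{2l+2}$ --- this is the graph of Figure \ref{mulgtimesp2}.

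Next I would apply Proposition \ref{inclunullhmt} with the vertex $x$, which requires $I(H-x)$, $I(H-N_H[x])$, and the null-homotopy of the inclusion between them. For $I(H-x)$: once $x$ is deleted, $N_{H-x}((u,1))\subseteq N_{H-x}((u,3))$ for every $u\in V(G)$, so Lemma \ref{lemmvertdad} deletes $V_3$; then the same argument deletes $V_6$, then $V_9$, and so on down the strand. The apex $y$ does not obstruct these deletions: in the step that would reach the last layer, the relevant neighbourhood containment still holds because $y$ only enlarges the dominating neighbourhood. Once all layers with index divisible by $3$ are gone, $H-x$ falls apart, by Lemma \ref{lemundisj}, into a join of copies of $I(G\times P_2)$ (one for each surviving pair $\{V_{3j-2},V_{3j-1}\}$) times a short terminal complex built from the last layer(s) and $y$; this terminal complex is handled by one more application of Lemma \ref{lemmvertdad} --- either $y$ becomes isolated (terminal factor contractible), or the penultimate layer is dominated by $y$ and collapses it (terminal factor $I(G\times P_2)$), or a single layer sits joined to $y$ (terminal factor $\mathbb{S}^0$, i.e.\ a reduced suspension). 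The computation of $I(H-N_H[x]) = I(H-x-V_1)$ is identical but shifted by one: delete $V_4$, then $V_7$, and so on, again ending in a join of copies of $I(G\times P_2)$ times a terminal complex of the same three types. Sorting out the three residues of $l$ modulo $3$ then yields the three cases of the theorem.

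For the null-homotopy: in the cases where $I(H-x)$ comes out contractible it is automatic; otherwise I would argue as in Theorem \ref{teomul}, either from the connectivity gap $\mathrm{conn}(I(H-x))>\mathrm{conn}(I(H-N_H[x]))$ or --- more cleanly --- by observing that the equivalences above identify the inclusion with a join inclusion $A\hookrightarrow B*A$ with $B\neq\emptyset$, which is null-homotopic. Proposition \ref{inclunullhmt} then gives $I(H)\simeq I(H-x)\vee\Sigma I(H-N_H[x])$.

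The step I expect to be the main obstacle is precisely the terminal bookkeeping in each residue class: one has to keep careful track of how many full pairs $\{V_{3j-2},V_{3j-1}\}$ survive, whether the last layer pairs with $y$ or leaves it isolated, and how this feeds into the suspension count --- in particular the exact number of suspensions in the case $l=3k+2$. The family $G=K_2$, where $\mu_l(K_2)=C_{2l+3}$ and hence $\mu_l(K_2)\times P_2 = C_{4l+6}$, gives a convenient sanity check against the known homotopy types of $I(C_n)$.
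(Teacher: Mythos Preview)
Your plan is exactly the paper's proof: identify $\mu_l(G)\times P_2$ with the two-apex strand $H$ of Figure~\ref{mulgtimesp2}, split via Proposition~\ref{inclunullhmt} at $x$, peel off every third layer from $H-x$ and from $H-N_H[x]$ using Lemma~\ref{lemmvertdad}, and resolve the terminal piece near $y$ case by case (the paper uses the connectivity argument you mention for the null-homotopy when $l=3k$). Do carry out your $G=K_2$ sanity check in the case $l=3k+2$: for $l=2$ one gets $I(C_{14})\simeq\mathbb{S}^4$, whereas the stated formula $\Sigma^2 I(G\times P_2)^{*2k+2}$ evaluates to $\mathbb{S}^5$ --- the layer-peeling you (and the paper) describe in fact yields $I(H-N_H[x])\simeq I(G\times P_2)^{*2k+2}$ with no extra suspension, so the correct entry for $l=3k+2$ is $\Sigma I(G\times P_2)^{*2k+2}$.
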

\begin{proof}
We take $H$ the graph obtained from $G\times P_{2l+2}$ by adding two new vertices $x,y$ and adding the edges needed to get that 
$N_H(x)=V(G)\times\{1\}$ and $N_H(y)=V(G)\times\{2l+2\}$ (see Figure \ref{mulgtimesp2}). Then 
$\mu_l(G)\times P_2\cong H$ and $I(\mu_l(G)\times P_2)\cong I(H)$. 
We take $V_i=V(G)\times\{i\}$, then with similar arguments to those in the proof of Theorem \ref{teomul}, we have that 
$$I(H-x)\simeq I(H-x-V_3-\cdots-V_{3(2k)})$$
We take $W=H-x-V_3-\cdots-V_{3(2k)}$. 
Now, we have $3$ cases:
\begin{itemize}
    \item For $l=3k$, we have that $2l+2=3(2k)+2$. Thus $N_W((v,3(2k)+1))\subseteq N_W(y)$ for any $v$ in $V(G)$,
    and, by Lemma \ref{lemmvertdad}, we have that 
    $I(W)\simeq I(G\times P_2)^{*2k+1}$.
    \item For $l=3k+1$, we have that $2l+2=3(2k+1)+1$. Thus $N_W(y)=V_{3(2k+1)+1}$ and $I(W)\simeq\Sigma I(G\times P_2)^{*2k+1}$.
    \item For $l=3k+2$, we have that $2l+2=3(2k+2)$ and  $I(W)\simeq*$.
\end{itemize}
Taking $T=H-N_H[x]$, then:
\begin{itemize}
    \item For $l=3k$, we have that $I(T)\simeq\Sigma I(G\times P_2)^{*2k}$.
    \item For $l=3k+1$, we have that $I(T)\simeq*$.
    \item For $l=3k+2$, we have that $I(T)\simeq\Sigma I(G\times P_2)^{*2k+2}$.
\end{itemize}
For the case $l=3k$, notice that $I(G\times P_2)$ is connected, thus  
$$\mathrm{conn}\left(I(G\times P_2)^{*2k+1}\right)=\mathrm{conn}\left(H-x\right)>\mathrm{conn}\left(I(H-N_H[x])\right)=\mathrm{conn}\left(\Sigma I(G\times P_2)^{*2k}\right).$$ 
Then, regardless of the case, we have that
$$I(H-N_H[x])\longhookrightarrow I(H-x)$$ is null-homotopic and the result follows from Proposition \ref{inclunullhmt}.
\end{proof}

\begin{cor}\label{cormulrtimesp2}
$$I(\mu_l^r(G)\times P_2)\simeq\left\lbrace
\begin{array}{cc}
\Sigma^{f(k,r)}I(G\times P_2)^{*(2k+1)^r} & \mbox{if } l=3k+1\\
\Sigma^{2g(k,r)}I(G\times P_2)^{*(2k+2)^r}    & \mbox{if } l=3k+2
\end{array}
\right.$$
\end{cor}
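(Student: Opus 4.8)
The plan is to prove Corollary~\ref{cormulrtimesp2} by induction on $r$, using Theorem~\ref{theogtimesp2} for the inductive step and the identities of Lemma~\ref{lemfg} to reconcile the bookkeeping of suspensions and joins. Recall that, for a space $X$, one has $\Sigma^a X^{*b} * \Sigma^c Y^{*d}$ type simplifications governed by the standard facts $\Sigma X \simeq \mathbb{S}^0 * X$ and $(\Sigma X)*Y \simeq \Sigma(X*Y)$, so that $\Sigma^a(X^{*m}) \simeq \big(\Sigma^{a}X\big)*X^{*(m-1)}$ and iterated joins/suspensions of a fixed complex $I(G\times P_2)$ collapse to the form $\Sigma^{N} I(G\times P_2)^{*M}$ for appropriate $N,M$. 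Throughout I will write $J = I(G\times P_2)$ for brevity.

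First I would treat the base case $r=1$: for $l=3k+1$, Theorem~\ref{theogtimesp2} gives $I(\mu_l(G)\times P_2)\simeq \Sigma J^{*2k+1}$, and since $f(k,1)=1$ and $(2k+1)^1 = 2k+1$, this matches $\Sigma^{f(k,1)}J^{*(2k+1)^1}$; similarly for $l=3k+2$ we get $\Sigma^2 J^{*2k+2}$, and $g(k,1)=1$, $(2k+2)^1=2k+2$ give the match. For the inductive step, fix $r\geq 1$ and assume the formula holds for $r$. Write $G_r = \mu_l^r(G)$, so $\mu_l^{r+1}(G) = \mu_l(G_r)$ and $\mu_l^{r+1}(G)\times P_2 = \mu_l(G_r)\times P_2$. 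Apply Theorem~\ref{theogtimesp2} with $G$ replaced by $G_r$. In the case $l=3k+1$ this yields
$$I(\mu_l(G_r)\times P_2)\simeq \Sigma\, \big(I(G_r\times P_2)\big)^{*(2k+1)}.$$
Now substitute the inductive hypothesis $I(G_r\times P_2)\simeq \Sigma^{f(k,r)} J^{*(2k+1)^r}$. The join of $2k+1$ copies of $\Sigma^{f(k,r)} J^{*(2k+1)^r}$ is $\Sigma^{(2k+1)f(k,r)} J^{*(2k+1)^{r+1}}$, and the extra outer suspension contributes $+1$ to the exponent, giving $\Sigma^{1+(2k+1)f(k,r)} J^{*(2k+1)^{r+1}}$. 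It then remains to check $1 + (2k+1)f(k,r) = f(k,r+1)$, which is exactly the recursion $f(k,r+1) = (2k+1)^r + f(k,r)$ repackaged via the closed form, or directly from the identity $f(k,r)=\sum_{i=0}^{r-1}(2k+1)^i$ in Lemma~\ref{lemfg}. The case $l=3k+2$ is parallel: Theorem~\ref{theogtimesp2} gives $\Sigma^2 (I(G_r\times P_2))^{*(2k+2)}$, substitution yields $\Sigma^{2+(2k+2)\cdot 2g(k,r)} J^{*(2k+2)^{r+1}}$, and one verifies $2 + 2(2k+2)g(k,r) = 2g(k,r+1)$, i.e. $(2k+2)g(k,r) = g(k,r+1) - 1$ — but note $(k+1)g(k,r) = \tfrac{g(k,r+1)-1}{2}$ is precisely the fourth identity of Lemma~\ref{lemfg}, so multiplying by $2$ gives what is needed.

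The step I expect to require the most care is the homotopy-theoretic simplification justifying that a join of $m$ copies of $\Sigma^{a} J^{*b}$ is homotopy equivalent to $\Sigma^{am} J^{*bm}$ — this uses associativity and commutativity of the join up to homotopy together with $\Sigma X * Y \simeq \Sigma(X*Y)$, all standard but worth stating cleanly once. A secondary point is that the $l=3k$ case of Theorem~\ref{theogtimesp2} is deliberately \emph{not} included in Corollary~\ref{cormulrtimesp2}: when $l=3k$ the formula for $I(\mu_l(G)\times P_2)$ is a genuine wedge (not a single suspended join), so iterating it does not collapse to the clean closed form, which is why the corollary restricts to $l\equiv 1,2 \pmod 3$. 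I would remark on this explicitly so the reader sees the restriction is essential, not an oversight.
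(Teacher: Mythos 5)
Your proposal is correct and follows essentially the same route as the paper: the paper's proof is exactly "apply Theorem \ref{theogtimesp2} iteratively and use Lemma \ref{lemfg} to track the suspensions," which is what your induction on $r$ spells out, with the base case, the join/suspension bookkeeping, and the identities $1+(2k+1)f(k,r)=f(k,r+1)$ and $1+(2k+2)g(k,r)=g(k,r+1)$ all checking out.
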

\begin{proof}
The result follows from Theorem \ref{theogtimesp2} and Lemma \ref{lemfg}.
\end{proof}

\begin{theorem}
$$I(\mu_{3k+1}^r(G))\simeq\Sigma^{2(r-1)+\frac{f(k,r)-r}{2}}I(G)*I(G\times P_2)^{*kf(k,r)+1}$$
$$I(\mu_{3k+2}^r(G))\simeq\Sigma^{\frac{g(k,r+1)-1}{2}}I(G\times P_2)^{(k+1)(2k+2)^{r-1}}$$
\end{theorem}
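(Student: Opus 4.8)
The plan is an induction on $r$ in which one Mycielskian layer is peeled off at a time. Write $\mu_{3k+t}^{r}(G)=\mu_{3k+t}\bigl(\mu_{3k+t}^{r-1}(G)\bigr)$ and apply Theorem~\ref{teomul} to the outermost operation, with $\mu_{3k+t}^{r-1}(G)$ playing the role of the input graph; this expresses $I(\mu_{3k+t}^{r}(G))$ through $I(\mu_{3k+t}^{r-1}(G))$ and $I(\mu_{3k+t}^{r-1}(G)\times P_2)$, the first of which is governed by the inductive hypothesis and the second by Corollary~\ref{cormulrtimesp2}. Throughout I use the standard fact that iterated suspension pulls out of joins, $\Sigma^{a}X*\Sigma^{b}Y\simeq\Sigma^{a+b}(X*Y)$, so that every finite join of iterated suspensions of joins of copies of $I(G)$ and $I(G\times P_2)$ collapses to a single iterated suspension of one join of copies of these two complexes; the substance is then the bookkeeping of the suspension exponent and of the two join multiplicities.

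I would treat $l=3k+2$ first, since it needs no inductive hypothesis: Theorem~\ref{teomul} gives $I(\mu_{3k+2}(H))\simeq I(H\times P_2)^{*(k+1)}$, which sees $H$ only through its Kronecker double cover. For $r\ge2$ put $H=\mu_{3k+2}^{r-1}(G)$ and substitute the $l=3k+2$ branch of Corollary~\ref{cormulrtimesp2} (the case $r=1$ is Theorem~\ref{teomul} itself). This presents $I(\mu_{3k+2}^{r}(G))$ as a $(k+1)$-fold join of one fixed iterated suspension of $I(G\times P_2)^{*(2k+2)^{r-1}}$, which collapses to an iterated suspension of $I(G\times P_2)^{*(k+1)(2k+2)^{r-1}}$; the exponent that has accumulated is $(k+1)$ times the one produced by Corollary~\ref{cormulrtimesp2}, and the identity $(k+1)g(k,r)=\frac{g(k,r+1)-1}{2}$ of Lemma~\ref{lemfg} rewrites it into the form of the statement.

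For $l=3k+1$ the induction has base case $r=1$ equal to the $l=3k+1$ line of Theorem~\ref{teomul}. For the step, Theorem~\ref{teomul} gives
$$I(\mu_{3k+1}^{r}(G))\simeq\Sigma\, I\bigl(\mu_{3k+1}^{r-1}(G)\bigr)\,*\,I\bigl(\mu_{3k+1}^{r-1}(G)\times P_2\bigr)^{*k}.$$
Into the first factor I substitute the inductive hypothesis, and into each of the $k$ copies of the second I substitute the $l=3k+1$ branch of Corollary~\ref{cormulrtimesp2}, namely $\Sigma^{f(k,r-1)}I(G\times P_2)^{*(2k+1)^{r-1}}$. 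Pulling all the suspensions outside: the number of $I(G)$ factors is unchanged, the number of $I(G\times P_2)$ factors grows by $k(2k+1)^{r-1}$, and the suspension exponent grows by $1+kf(k,r-1)$. Unrolling these one-step recursions produces sums of the shape $\sum(2k+1)^{i}$ and $k\sum f(k,i)$, which are precisely the quantities evaluated in Lemma~\ref{lemfg} (namely $f(k,r)=\sum_{i=0}^{r-1}(2k+1)^{i}$ and $k\sum_{i=1}^{r-1}f(k,i)=\frac{f(k,r)-r}{2}$); feeding these in yields the closed forms in the statement.

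The main obstacle is not geometric: given the suspension/join identity and Theorem~\ref{teomul}, every homotopy equivalence above is forced, with no wedge summands or null-homotopy arguments needed (those enter Theorems~\ref{teomul} and~\ref{theogtimesp2} only in the $l=3k$ case, which does not arise here). The work is the arithmetic — setting up the two one-step recursions with the correct seeds at $r=1$ coming from Theorem~\ref{teomul}, handling the degenerate conventions ($I(G)^{*0}=\{\emptyset\}$, $\Sigma^{0}=\mathrm{id}$, and the empty sums $f(k,0)=g(k,0)=0$), and checking that the resulting telescoping sums collapse exactly to the expressions built from $f$ and $g$. Lemma~\ref{lemfg} is tailor-made for this last step, so the verification, while a little intricate, is routine rather than delicate.
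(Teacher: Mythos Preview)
Your approach is correct and is exactly what the paper's one-line proof is invoking: the paper simply states that the result follows from Lemma~\ref{lemfg}, Theorem~\ref{teomul}, and Corollary~\ref{cormulrtimesp2}, and your inductive peeling of one Mycielskian layer via Theorem~\ref{teomul}, substitution from Corollary~\ref{cormulrtimesp2} for the double-cover factor, and telescoping of the resulting sums through the identities of Lemma~\ref{lemfg} is precisely how those three ingredients combine.
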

\begin{proof}
The result follows from Lemma \ref{lemfg}, Theorem \ref{teomul} and Corollary \ref{cormulrtimesp2}.
\end{proof}

For the case $l=3k$ it does not seem feasible to give a closed formula. As example, by Theorems \ref{teomul} and 
\ref{theogtimesp2} we have that
$$I(\mu_{3k}^2(G))\simeq\left(I(G)*I(G\times P_2)^{*3k+1}\right)\vee\left(\Sigma^2I(G)*I(G\times P_2)^{*3k}\right)\vee\Sigma I(G\times P_2)^{*3k+1}\vee$$
$$\Sigma^3I(G\times P_2)^{*3k}\vee\bigvee_{i=0}^k\bigvee_{\binom{k}{i}}\Sigma^{2i+1}I(G\times P_2)^{*k^2+k-i}$$
While we do not have a close formula for $l=3k$ we can obtain the following Theorem.
\begin{theorem}
For $l=3k$, $I(\mu_{3k}^r(G))$ has the homotopy type of a wedge of spaces such that each space has the homotopy type of an iterated suspension 
of: a join of copies of $I(G)$; a join of copies of $I(G\times K_2)$; or or a join of both types of spaces.
\end{theorem}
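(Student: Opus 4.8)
The plan is to argue by induction on $r$, using Theorem \ref{teomul} and Theorem \ref{theogtimesp2} as the inductive tools and exploiting the fact that the relevant class of spaces---finite wedges of iterated suspensions of joins of copies of $I(G)$ and of $I(G\times P_2)$---is closed under the operations that appear on the right-hand sides of those two theorems. Call such a space \emph{admissible}. First I would record the closure properties of admissible spaces: they are closed under finite wedge, under suspension, and---crucially---under join with $I(G)$ and with $I(G\times P_2)$. The only nontrivial point here is that a join distributes over a wedge up to homotopy, i.e.\ $X*(A\vee B)\simeq (X*A)\vee(X*B)$ when the pieces are well-pointed CW complexes, and that $X*\Sigma Y\simeq\Sigma(X*Y)$; both are standard, and $I(G)$, $I(G\times P_2)$ and all the complexes built from them are finite CW, hence well-pointed. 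So joining an admissible space with $I(G)^{*a}*I(G\times P_2)^{*b}$ and then suspending and wedging keeps us inside the admissible class.

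Next I would set up the induction. For $r=1$ the statement is exactly the $l=3k$ line of Theorem \ref{teomul}, namely $I(\mu_{3k}(G))\simeq I(G)*I(G\times P_2)^{*k}\vee\Sigma I(G\times P_2)^{*k}$, which is visibly admissible (it is a wedge of one iterated suspension of a join of both types and one iterated suspension of a join of the second type). For the inductive step, write $G'=\mu_{3k}^{r-1}(G)$, so that $\mu_{3k}^r(G)=\mu_{3k}(G')$, and apply Theorem \ref{teomul} with $l=3k$:
$$I(\mu_{3k}^r(G))\simeq I(G')*I(G'\times P_2)^{*k}\vee\Sigma I(G'\times P_2)^{*k}.$$
By the inductive hypothesis $I(G')$ is admissible. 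The point is that $I(G'\times P_2)$ is \emph{also} admissible in the required sense: one needs a parallel induction showing that $I(\mu_{3k}^{r-1}(G)\times P_2)$ is a finite wedge of iterated suspensions of joins of copies of $I(G\times P_2)$ (this is the ``only $I(G\times K_2)$'' sub-case). That parallel claim follows by the same induction, now driven by Theorem \ref{theogtimesp2} with $l=3k$, namely $I(\mu_{3k}(H)\times P_2)\simeq I(H\times P_2)^{*2k+1}\vee\Sigma^2 I(H\times P_2)^{*2k}$, together with the observation that the Kronecker double cover is compatible with Mycielskian iteration in the way Figure \ref{mulgtimesp2} encodes. So I would prove these two statements simultaneously: (a) $I(\mu_{3k}^r(G))$ is admissible, and (b) $I(\mu_{3k}^r(G)\times P_2)$ is a finite wedge of iterated suspensions of joins of copies of $I(G\times P_2)$.

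Granting (b) for $r-1$, the complexes $I(G'\times P_2)^{*k}$ and $\Sigma I(G'\times P_2)^{*k}$ are finite wedges of iterated suspensions of joins of copies of $I(G\times P_2)$, hence admissible; joining the admissible space $I(G')$ with such a wedge, using the distributivity of $*$ over $\vee$ and over $\Sigma$, yields an admissible space; wedging on $\Sigma I(G'\times P_2)^{*k}$ and simplifying keeps it admissible. This closes the induction for (a). For (b), apply Theorem \ref{theogtimesp2} with $H=G'$ and $l=3k$: each summand is an iterated suspension of a join of copies of $I(G'\times P_2)$, and substituting the inductive description of $I(G'\times P_2)$ as a wedge of iterated suspensions of joins of $I(G\times P_2)$'s, again using that joins distribute over wedges and suspensions, gives the desired form for $I(\mu_{3k}^r(G)\times P_2)$. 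This closes the induction for (b).

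The main obstacle I anticipate is purely bookkeeping rather than conceptual: the right-hand side of Theorem \ref{teomul} in case $l=3k$ feeds $I(G')$ and $I(G'\times P_2)$ into \emph{both} a join and a suspension-wedge, and because $I(G')$ itself is already a wedge of many iterated suspensions of joins (as the displayed formula for $I(\mu_{3k}^2(G))$ shows, the number of wedge summands grows like a sum of binomial coefficients), one must be careful that ``iterated suspension of a join of copies of $I(G)$ and/or copies of $I(G\times P_2)$'' is exactly the closure of $\{I(G),I(G\times P_2)\}$ under $\Sigma$, $*$, and $\vee$, and that no other building block sneaks in. The cleanest way to avoid getting lost is to phrase everything in terms of the admissible class and its three closure properties, verify those three properties once and for all at the start, and then let the two theorems do the work; the inductive step is then a one-line substitution plus an appeal to distributivity, with no need to track the explicit number of spheres or the explicit suspension indices.
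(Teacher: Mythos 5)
Your proposal is correct and matches the paper's argument, which likewise deduces the statement by iterating Theorem \ref{teomul} and Theorem \ref{theogtimesp2}; you have simply made explicit the joint induction on $r$ (tracking $I(\mu_{3k}^r(G))$ and $I(\mu_{3k}^r(G)\times P_2)$ simultaneously) and the closure properties of the class of spaces involved, which the paper leaves implicit. No gaps.
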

\begin{proof}
This follows from Theorem \ref{teomul} and Theorem \ref{theogtimesp2}.
\end{proof}

Similar to the corollaries of the previous section, we have the following corollaries whose proofs are analogous to the ones in the 
previous section.

\begin{cor}
Let $G$ be any of the following graphs: $P_n,C_n,K_n,K_n\times K_m$. Then $I(\mu_l^r(G))$ has the homotopy type of a wedge of spheres.
\end{cor}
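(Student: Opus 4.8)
The plan is to reduce the statement to two inputs. First, for each graph $G$ in the list, both $I(G)$ and $I(G\times P_2)$ have the homotopy type of a wedge of spheres, where we allow the empty wedge (a point) as a degenerate case. Second, the class of spaces homotopy equivalent to a wedge of spheres (or contractible) is closed under the three operations that occur in Theorems \ref{teomul} and \ref{theogtimesp2} and their iterated consequences: suspension, join, and wedge. Granting these, the formulas of Section 4 express $I(\mu_l^r(G))$ as a finite wedge of iterated suspensions of joins of copies of $I(G)$ and $I(G\times P_2)$; for $l=3k+1$ and $l=3k+2$ one uses the explicit formulas of the iterated-Mycielskian theorem, and for $l=3k$ one invokes the structural theorem asserting that $I(\mu_{3k}^r(G))$ is a wedge of iterated suspensions of joins of copies of $I(G)$ and $I(G\times K_2)$. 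In every case the conclusion is then immediate.

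For the closure properties I would record the standard homotopy identities $\Sigma(X\vee Y)\simeq\Sigma X\vee\Sigma Y$, $\Sigma\mathbb{S}^n\simeq\mathbb{S}^{n+1}$, $\mathbb{S}^a*\mathbb{S}^b\simeq\mathbb{S}^{a+b+1}$, and the distributivity of join over wedge up to homotopy, $X*(Y\vee Z)\simeq(X*Y)\vee(X*Z)$, valid for well-pointed CW complexes (a consequence of $X*Y\simeq\Sigma(X\wedge Y)$ and the fact that smash products preserve wedges up to homotopy); all complexes in sight are finite simplicial complexes, so well-pointedness is not an issue. A point is the unit for the wedge and absorbs joins, so a join in which one factor is contractible is again contractible; hence allowing the empty wedge causes no trouble. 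By induction on the number of operations, any space obtained from wedges of spheres by finitely many suspensions, joins and wedges is again a wedge of spheres or contractible.

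It remains to verify the base facts. For $I(G)$: $I(P_n)$ is contractible or a sphere \citep[see Proposition 5.2.]{kozlovdire}; $I(C_n)$ is a wedge of spheres \citep[see Proposition 4.6.]{kozlovdire}; $I(K_n)\simeq\bigvee_{n-1}\mathbb{S}^0$; and $I(K_n\times K_m)$ is a wedge of $(n-1)(m-1)$ circles \citep[see Proposition 3.4.]{homotopygoyal}. For $I(G\times P_2)$: since $P_n$ is bipartite, $P_n\times P_2\cong P_n\sqcup P_n$, so Lemma \ref{lemundisj} gives $I(P_n\times P_2)\cong I(P_n)*I(P_n)$, a wedge of spheres or a point; $I(C_n\times P_2)$ is a wedge of spheres \citep[see Proposition 3.4.]{indcomplcatprod}; $I(K_n\times K_2)\simeq\bigvee_{n-1}\mathbb{S}^1$, as computed in the corollary above; and $I(K_2\times K_n\times K_m)$ is a wedge of $3$-spheres \citep[see Proposition 4.1.]{indcomplcatprod}. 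Substituting these into the formulas of Section 4 proves the claim.

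I expect the only point requiring care to be the bookkeeping around contractible factors together with the precise statement of the join--wedge distributivity; the genuine homotopy-theoretic content is entirely contained in Theorems \ref{teomul} and \ref{theogtimesp2}, and the rest is formal manipulation of wedges, joins and suspensions of spheres.
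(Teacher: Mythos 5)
Your proposal is correct and follows essentially the same route as the paper, which leaves the proof implicit as ``analogous to the previous section'': plug the known wedge-of-spheres homotopy types of $I(G)$ and $I(G\times P_2)$ for $P_n$, $C_n$, $K_n$, $K_n\times K_m$ into the Section~4 formulas (and the structural theorem for $l=3k$), and use closure of the wedge-of-spheres class under suspension, join and wedge. Your explicit treatment of contractible factors and of join--wedge distributivity is exactly the bookkeeping the paper omits.
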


\begin{cor}
Let $T$ be a forest. Then $I(\mu_l^r(T))$ is contractible or it has the homotopy type of a wedge of spheres.
\end{cor}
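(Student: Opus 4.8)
The plan is to reduce everything to two facts about a forest $T$: that $I(T)$ is contractible or homotopy equivalent to a sphere \citep[see Corollary 6.1.]{Ehrenborg_2006}, and that a forest is bipartite, so $T\times P_2\cong T\sqcup T$ and hence $I(T\times P_2)\simeq I(T)*I(T)$ by Lemma \ref{lemundisj}; since a join of two spheres is a sphere and a join with a contractible space is contractible, $I(T\times P_2)$ is likewise contractible or a sphere. From there the proof is bookkeeping, resting on three closure properties: an iterated suspension of a space that is contractible or a sphere is again contractible or a sphere; a join of such spaces is contractible as soon as one factor is, and is a sphere otherwise; and a wedge of such spaces is contractible when every summand is, and is a wedge of spheres once the contractible summands are deleted.

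First I would dispose of $l=3k+1$ and $l=3k+2$ using the closed formulas obtained above, which present $I(\mu_{3k+1}^r(T))$ as an iterated suspension of $I(T)*I(T\times P_2)^{*m}$ and $I(\mu_{3k+2}^r(T))$ as an iterated suspension of $I(T\times P_2)^{*m'}$ for explicit exponents $m,m'$ coming from Lemma \ref{lemfg} and Corollary \ref{cormulrtimesp2}. Substituting the two facts above, each right-hand side is an iterated suspension of a join of spheres, or is contractible as soon as one join factor is; either way it is contractible or a single sphere.

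For $l=3k$ there is no closed formula, so here I would invoke instead the qualitative structural theorem proved above: $I(\mu_{3k}^r(T))$ has the homotopy type of a wedge of spaces, each of which is an iterated suspension of a join of copies of $I(T)$ and of $I(T\times K_2)=I(T\times P_2)$. By the same closure properties each wedge summand is contractible or a sphere, hence the whole wedge is contractible (when all summands are) or homotopy equivalent to a wedge of spheres.

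I do not expect a genuine obstacle: all the structural content is already carried by Theorem \ref{teomul}, Theorem \ref{theogtimesp2} and Corollary \ref{cormulrtimesp2}, together with the cited description of $I(T)$. The only place needing a little care is the case $l=3k$, where one must be content with the qualitative wedge decomposition rather than an explicit count of spheres, and must keep an eye on degenerate possibilities such as $I(T)$ being a single point rather than a positive-dimensional sphere, or one join factor being contractible and thereby collapsing an entire summand to a point.
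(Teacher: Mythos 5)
Your proposal is correct and follows essentially the same route the paper intends: the paper's proof is simply "analogous to the previous section," i.e., combine the structural results for $I(\mu_l^r(G))$ (the closed formulas for $l=3k+1,3k+2$ and the qualitative wedge-of-suspended-joins description for $l=3k$) with bipartiteness of $T$ and the fact that $I(T)$ is contractible or a sphere. Your extra care about degenerate cases (contractible factors collapsing joins) is sound and only makes the argument more explicit.
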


\begin{cor}
If $G$ is the rectangular lattice with $n$ columns and at most $6$ rows, then $I(\mu_l^r(G))$ has the homotoy type of a wedge of spheres.
\end{cor}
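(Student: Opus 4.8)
The proof will be entirely formal, combining the structure theorems of this section with the known homotopy type of $I(G)$ for such lattices. The plan is as follows. First, observe that the rectangular lattice $G$ with $n$ columns and $m\le 6$ rows is bipartite (use the checkerboard $2$-colouring of its vertices), so $G\times P_2\cong G\sqcup G$ and hence $I(G\times P_2)\simeq I(G)*I(G)$ by Lemma \ref{lemundisj}. Next, by Theorem \ref{teomul}, Theorem \ref{theogtimesp2} and Corollary \ref{cormulrtimesp2} --- equivalently by the closed formulas for $I(\mu_{3k+1}^r(G))$ and $I(\mu_{3k+2}^r(G))$ and by the structure theorem for $l=3k$ established above --- the complex $I(\mu_l^r(G))$ has, in every residue class of $l$ modulo $3$, the homotopy type of a finite wedge of spaces, each of which is an iterated suspension of a finite join of copies of $I(G)$ and $I(G\times P_2)$. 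After the substitution $I(G\times P_2)\simeq I(G)*I(G)$, every such summand becomes an iterated suspension of a finite join of copies of $I(G)$.

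It then suffices to prove the purely topological closure statement: the class of spaces homotopy equivalent to a finite wedge of spheres is closed under finite wedges, suspension, and finite joins. Wedges are trivial. For suspension use that $\Sigma$ preserves wedges up to homotopy and $\Sigma\mathbb{S}^q\simeq\mathbb{S}^{q+1}$. For joins use $X*Y\simeq\Sigma(X\wedge Y)$ for well-pointed CW complexes together with the facts that smash products distribute over wedges up to homotopy and $\mathbb{S}^p\wedge\mathbb{S}^q\simeq\mathbb{S}^{p+q}$; iterating handles joins of arbitrarily many wedges of spheres. All the complexes occurring here are finite simplicial complexes, so these homotopy equivalences are legitimate. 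Finally, $I(G)$ itself has the homotopy type of a finite wedge of spheres for the rectangular lattices with at most six rows by \citep{matsushitan4n5,matsushitan6}; feeding this into the previous paragraph shows that each summand of $I(\mu_l^r(G))$, and hence $I(\mu_l^r(G))$ itself, has the homotopy type of a wedge of spheres.

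I do not expect a genuine obstacle: the only things to be careful about are (i) citing the correct decomposition of $I(G)$ for $m\le 6$ rows from \citep{matsushitan4n5,matsushitan6}, and (ii) applying the ``join/smash distributes over wedge'' equivalence only to CW complexes, which is automatic here. If one wanted explicit sphere dimensions and multiplicities rather than just the qualitative conclusion, one would have to unwind the recursion in Corollary \ref{cormulrtimesp2} and the $l=3k$ structure theorem together with the explicit wedge decompositions of $I(G)$, which is routine but lengthy; for the statement as given this is unnecessary.
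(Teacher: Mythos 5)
Your proof is correct and follows essentially the same route as the paper: the paper reduces to the bipartite case via $G\times P_2\cong G\sqcup G$ (Proposition \ref{propbipmu} and its iterated analogues), feeds in the wedge-of-spheres description of $I(G)$ from \citep{matsushitan4n5,matsushitan6}, and concludes by the closure of wedges of spheres under suspension, wedge and join. Your write-up merely makes explicit the closure argument that the paper leaves implicit.
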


\bibliographystyle{acm}
\bibliography{indcomplmckzn}

@Article{kozlovdire,
 AUTHOR = {Kozlov, Dmitry N.},
     TITLE = {Complexes of directed trees},
   JOURNAL = {J. Combin. Theory Ser. A},
  FJOURNAL = {Journal of Combinatorial Theory. Series A},
    VOLUME = {88},
      YEAR = {1999},
    NUMBER = {1},
     PAGES = {112--122},
      ISSN = {0097-3165,1096-0899},
   MRCLASS = {05C10 (05C05 05E25)},
  MRNUMBER = {1713484},
MRREVIEWER = {Timothy\ Y.\ Chow},
       DOI = {10.1006/jcta.1999.2984},
       }

@Article{homotopygoyal,
  AUTHOR = {Goyal, Shuchita and Shukla, Samir and Singh, Anurag},
     TITLE = {Homotopy type of independence complexes of certain families of
              graphs},
   JOURNAL = {Contrib. Discrete Math.},
  FJOURNAL = {Contributions to Discrete Mathematics},
    VOLUME = {16},
      YEAR = {2021},
    NUMBER = {3},
     PAGES = {74--92},
      ISSN = {1715-0868},
   MRCLASS = {05C69 (55P15)},
  MRNUMBER = {4369845},
MRREVIEWER = {Sel\c{c}uk\ Kayacan},
DOI = {10.11575/cdm.v16i3.71284},
}

@Article{adamsplit,
 AUTHOR = {Adamaszek, Micha\l },
     TITLE = {Splittings of independence complexes and the powers of cycles},
   JOURNAL = {J. Combin. Theory Ser. A},
  FJOURNAL = {Journal of Combinatorial Theory. Series A},
    VOLUME = {119},
      YEAR = {2012},
    NUMBER = {5},
     PAGES = {1031--1047},
      ISSN = {0097-3165,1096-0899},
   MRCLASS = {05E45 (05C76)},
  MRNUMBER = {2891380},
MRREVIEWER = {Sonja\ \v{C}uki\'{c}},
       DOI = {10.1016/j.jcta.2012.01.009},
       }

@Article{barmak,
 AUTHOR = {Barmak, Jonathan Ariel},
     TITLE = {Star clusters in independence complexes of graphs},
   JOURNAL = {Adv. Math.},
  FJOURNAL = {Advances in Mathematics},
    VOLUME = {241},
      YEAR = {2013},
     PAGES = {33--57},
      ISSN = {0001-8708,1090-2082},
   MRCLASS = {57M15 (05C69)},
  MRNUMBER = {3053703},
MRREVIEWER = {Jos\'{e}\ Antonio\ Vilches},
       DOI = {10.1016/j.aim.2013.03.016},
       }

@Article{engstrom09,
  AUTHOR = {Engstr\"{o}m, Alexander},
     TITLE = {Complexes of directed trees and independence complexes},
   JOURNAL = {Discrete Math.},
  FJOURNAL = {Discrete Mathematics},
    VOLUME = {309},
      YEAR = {2009},
    NUMBER = {10},
     PAGES = {3299--3309},
      ISSN = {0012-365X,1872-681X},
   MRCLASS = {05C10 (05C05 05C20 52B22)},
  MRNUMBER = {2526748},
       DOI = {10.1016/j.disc.2008.09.033},
       }

@Article{meshulandom,
    AUTHOR = {Meshulam, Roy},
     TITLE = {Domination numbers and homology},
   JOURNAL = {J. Combin. Theory Ser. A},
  FJOURNAL = {Journal of Combinatorial Theory. Series A},
    VOLUME = {102},
      YEAR = {2003},
    NUMBER = {2},
     PAGES = {321--330},
      ISSN = {0097-3165,1096-0899},
   MRCLASS = {05C69},
  MRNUMBER = {1979537},
MRREVIEWER = {Viorel\ Mihai\ Gontineac},
       DOI = {10.1016/S0097-3165(03)00045-1},
       URL = {https://doi.org/10.1016/S0097-3165(03)00045-1},
}

@Article{Ehrenborg_2006,
    AUTHOR = {Ehrenborg, Richard and Hetyei, G\'abor},
     TITLE = {The topology of the independence complex},
   JOURNAL = {European J. Combin.},
  FJOURNAL = {European Journal of Combinatorics},
    VOLUME = {27},
      YEAR = {2006},
    NUMBER = {6},
     PAGES = {906--923},
      ISSN = {0195-6698,1095-9971},
   MRCLASS = {57Q10 (05C69 52B70)},
  MRNUMBER = {2226426},
MRREVIEWER = {Yusuf\ Civan},
       DOI = {10.1016/j.ejc.2005.04.010},
       URL = {https://doi.org/10.1016/j.ejc.2005.04.010},
}

@article {matsushitan6,
    AUTHOR = {Matsushita, Takahiro and Wakatsuki, Shun},
     TITLE = {Independence complexes of {$(n\times 6)$}-grid graphs},
   JOURNAL = {Homology Homotopy Appl.},
  FJOURNAL = {Homology, Homotopy and Applications},
    VOLUME = {26},
      YEAR = {2024},
    NUMBER = {1},
     PAGES = {15--27},
      ISSN = {1532-0073,1532-0081},
   MRCLASS = {55P10 (05C69)},
  MRNUMBER = {4700575},
}

@article {matsushitan4n5,
    AUTHOR = {Matsushita, Takahiro and Wakatsuki, Shun},
     TITLE = {Independence complexes of {$(n \times 4)$} and {$(n \times
              5)$}-grid graphs},
   JOURNAL = {Topology Appl.},
  FJOURNAL = {Topology and its Applications},
    VOLUME = {334},
      YEAR = {2023},
     PAGES = {Paper No. 108541, 18},
      ISSN = {0166-8641,1879-3207},
   MRCLASS = {55P10 (05C69)},
  MRNUMBER = {4587515},
MRREVIEWER = {Bennet\ Goeckner},
       DOI = {10.1016/j.topol.2023.108541},
       URL = {https://doi.org/10.1016/j.topol.2023.108541},
}

@Article{indcomplcatprod,
    AUTHOR = {Antol\'in Camarena, Omar and Carnero Bravo, Andr\'es},
     TITLE = {Homotopy type of the independence complex of some categorical
              products of graphs},
   JOURNAL = {Homology Homotopy Appl.},
  FJOURNAL = {Homology, Homotopy and Applications},
    VOLUME = {26},
      YEAR = {2024},
    NUMBER = {2},
     PAGES = {349--373},
      ISSN = {1532-0073,1532-0081},
   MRCLASS = {55P10 (05C69 55P15)},
  MRNUMBER = {4827652},
       DOI = {10.4310/hha.2024.v26.n2.a17},
       URL = {https://doi.org/10.4310/hha.2024.v26.n2.a17},
}

@Article{indpcomplgrpprod,
  author      = {Carnero Bravo, Andrés},
  title       = {On independence complexes of graph products},
  journal     ={https://arxiv.org/abs/2505.06457},
  year        = {2025},
  eprint      = {2505.06457v1},
  eprintclass = {math.AT},
  eprinttype  = {arXiv},
  url        = {http://arxiv.org/pdf/2505.06457v1},
  keywords    = {math.AT, math.CO},
}

@article {dliou,
    AUTHOR = {Dliou, Kamal},
     TITLE = {Independence, matching and packing coloring of the iterated
              {M}ycielskian of graphs},
   JOURNAL = {Discrete Appl. Math.},
  FJOURNAL = {Discrete Applied Mathematics. The Journal of Combinatorial
              Algorithms, Informatics and Computational Sciences},
    VOLUME = {361},
      YEAR = {2025},
     PAGES = {22--33},
      ISSN = {0166-218X,1872-6771},
   MRCLASS = {05C70 (05C76 05C78)},
  MRNUMBER = {4798852},
MRREVIEWER = {Iztok\ Peterin},
       DOI = {10.1016/j.dam.2024.09.015},
       URL = {https://doi.org/10.1016/j.dam.2024.09.015},
}

@article {fisherdavid,
    AUTHOR = {Fisher, David C. and McKenna, Patricia A. and Boyer, Elizabeth
              D.},
     TITLE = {Hamiltonicity, diameter, domination, packing, and biclique
              partitions of {M}ycielski's graphs},
   JOURNAL = {Discrete Appl. Math.},
  FJOURNAL = {Discrete Applied Mathematics. The Journal of Combinatorial
              Algorithms, Informatics and Computational Sciences},
    VOLUME = {84},
      YEAR = {1998},
    NUMBER = {1-3},
     PAGES = {93--105},
      ISSN = {0166-218X,1872-6771},
   MRCLASS = {05C70 (05C15 05C35 05C45 05C50)},
  MRNUMBER = {1626550},
MRREVIEWER = {Jerrold\ W.\ Grossman},
       DOI = {10.1016/S0166-218X(97)00126-1},
       URL = {https://doi.org/10.1016/S0166-218X(97)00126-1},
}

@article {huang,
    AUTHOR = {Huang, Lingling and Chang, Gerard J.},
     TITLE = {The circular chromatic number of the {M}ycielskian of
              {$G^d_k$}},
   JOURNAL = {J. Graph Theory},
  FJOURNAL = {Journal of Graph Theory},
    VOLUME = {32},
      YEAR = {1999},
    NUMBER = {1},
     PAGES = {63--71},
      ISSN = {0364-9024,1097-0118},
   MRCLASS = {05C15},
  MRNUMBER = {1704152},
MRREVIEWER = {I.\ Broere},
       DOI = {10.1002/(sici)1097-0118(199909)32:1<63::aid-jgt6>3.0.co;2-b},
       URL = {https://doi.org/10.1002/(sici)1097-0118(199909)32:1<63::aid-jgt6>3.0.co;2-b},
}

@article {tardif,
    AUTHOR = {Tardif, Claude},
     TITLE = {Fractional chromatic numbers of cones over graphs},
   JOURNAL = {J. Graph Theory},
  FJOURNAL = {Journal of Graph Theory},
    VOLUME = {38},
      YEAR = {2001},
    NUMBER = {2},
     PAGES = {87--94},
       DOI = {10.1002/jgt.1025},
       URL = {https://doi.org/10.1002/jgt.1025},
}

@article {dovslic,
    AUTHOR = {Dosli\'c, Tomislav},
     TITLE = {Mycielskians and matchings},
   JOURNAL = {Discuss. Math. Graph Theory},
  FJOURNAL = {Discussiones Mathematicae. Graph Theory},
    VOLUME = {25},
      YEAR = {2005},
    NUMBER = {3},
     PAGES = {261--266},
      ISSN = {1234-3099,2083-5892},
   MRCLASS = {05C70},
  MRNUMBER = {2232992},
       DOI = {10.7151/dmgt.1279},
       URL = {https://doi.org/10.7151/dmgt.1279},
}

@article {alishahi,
    AUTHOR = {Alishahi, Meysam and Hajiabolhassan, Hossein},
     TITLE = {Circular coloring and {M}ycielski construction},
   JOURNAL = {Discrete Math.},
  FJOURNAL = {Discrete Mathematics},
    VOLUME = {310},
      YEAR = {2010},
    NUMBER = {10-11},
     PAGES = {1544--1550},
      ISSN = {0012-365X,1872-681X},
   MRCLASS = {05C15 (05C60)},
  MRNUMBER = {2601263},
MRREVIEWER = {Mamad\ Ghebleh},
       DOI = {10.1016/j.disc.2010.01.019},
       URL = {https://doi.org/10.1016/j.disc.2010.01.019},
}

@article {larsen,
    AUTHOR = {Larsen, Michael and Propp, James and Ullman, Daniel},
     TITLE = {The fractional chromatic number of {M}ycielski's graphs},
   JOURNAL = {J. Graph Theory},
  FJOURNAL = {Journal of Graph Theory},
    VOLUME = {19},
      YEAR = {1995},
    NUMBER = {3},
     PAGES = {411--416},
      ISSN = {0364-9024,1097-0118},
   MRCLASS = {05C15},
  MRNUMBER = {1324490},
MRREVIEWER = {B.\ Manvel},
       DOI = {10.1002/jgt.3190190313},
       URL = {https://doi.org/10.1002/jgt.3190190313},
}

@article {lin2010,
    AUTHOR = {Lin, Wensong and Liu, Daphne Der-Fen and Zhu, Xuding},
     TITLE = {Multi-coloring the {M}ycielskian of graphs},
   JOURNAL = {J. Graph Theory},
  FJOURNAL = {Journal of Graph Theory},
    VOLUME = {63},
      YEAR = {2010},
    NUMBER = {4},
     PAGES = {311--323},
      ISSN = {0364-9024,1097-0118},
   MRCLASS = {05C15},
  MRNUMBER = {2605007},
MRREVIEWER = {Peter\ Mih\'ok},
       DOI = {10.1002/jgt.20429},
       URL = {https://doi.org/10.1002/jgt.20429},
}

@article {lin2006,
    AUTHOR = {Lin, Wensong and Wu, Jianzhuan and Lam, Peter Che Bor and Gu,
              Guohua},
     TITLE = {Several parameters of generalized {M}ycielskians},
   JOURNAL = {Discrete Appl. Math.},
  FJOURNAL = {Discrete Applied Mathematics. The Journal of Combinatorial
              Algorithms, Informatics and Computational Sciences},
    VOLUME = {154},
      YEAR = {2006},
    NUMBER = {8},
     PAGES = {1173--1182},
      ISSN = {0166-218X,1872-6771},
   MRCLASS = {05C69},
  MRNUMBER = {2219419},
MRREVIEWER = {Paul\ Andrew\ Dreyer, Jr.},
       DOI = {10.1016/j.dam.2005.11.001},
       URL = {https://doi.org/10.1016/j.dam.2005.11.001},
}

@article {bidine,
    AUTHOR = {Bidine, Ez Zobair and Gadi, Taoufiq and Kchikech, Mustapha},
     TITLE = {Independence number and packing coloring of generalized
              {M}ycielski graphs},
   JOURNAL = {Discuss. Math. Graph Theory},
  FJOURNAL = {Discussiones Mathematicae. Graph Theory},
    VOLUME = {41},
      YEAR = {2021},
    NUMBER = {3},
     PAGES = {725--747},
      ISSN = {1234-3099,2083-5892},
   MRCLASS = {05C69 (05C12 05C15 05C70)},
  MRNUMBER = {4221848},
MRREVIEWER = {Aleksander\ Vesel},
       DOI = {10.7151/dmgt.2337},
       URL = {https://doi.org/10.7151/dmgt.2337},
}

@article{matsushitaegbcomplkrdcovers,
 author = {Matsushita, Takahiro},
 title = {Neighborhood complexes and {Kronecker} double coverings},
 fjournal = {Osaka Journal of Mathematics},
 journal = {Osaka J. Math.},
 issn = {0030-6126},
 volume = {58},
 number = {3},
 pages = {637--645},
 year = {2021},
 language = {English},
 keywords = {05E45,05C70,05C15,55U10},
 url = {projecteuclid.org/ojm/5246},
 zbMATH = {7402991},
 Zbl = {1473.05333}
}

@article{matsushitagkrnbipknser,
 author = {Matsushita, Takahiro},
 title = {Graphs whose {Kronecker} covers are bipartite {Kneser} graphs},
 fjournal = {Discrete Mathematics},
 journal = {Discrete Math.},
 issn = {0012-365X},
 volume = {344},
 number = {4},
 pages = {6},
 note = {Id/No 112264},
 year = {2021},
 language = {English},
 doi = {10.1016/j.disc.2020.112264},
 keywords = {05C76,05C25,05C15,05C70},
 zbMATH = {7327810},
 Zbl = {1465.05153}
}

@article{wallerdobcover,
 author = {Waller, Derek A.},
 title = {Double covers of graphs},
 fjournal = {Bulletin of the Australian Mathematical Society},
 journal = {Bull. Aust. Math. Soc.},
 issn = {0004-9727},
 volume = {14},
 pages = {233--248},
 year = {1976},
 language = {English},
 doi = {10.1017/S0004972700025053},
 keywords = {05C99,05C10,57M10},
 zbMATH = {3497928},
 Zbl = {0318.05113}
}

@article{imrichmultkrncov,
 author = {Imrich, Wilfried and Pisanski, Toma{\v{z}}},
 title = {Multiple {Kronecker} covering graphs},
 fjournal = {European Journal of Combinatorics},
 journal = {Eur. J. Comb.},
 issn = {0195-6698},
 volume = {29},
 number = {5},
 pages = {1116--1122},
 year = {2008},
 language = {English},
 doi = {10.1016/j.ejc.2007.07.001},
 keywords = {05C70},
 zbMATH = {5280752},
 Zbl = {1168.05359}
}

@article{krncgenptskrcv,
 author = {Krnc, Matja{\v{z}} and Pisanski, Toma{\v{z}}},
 title = {Generalized {Petersen} graphs and {Kronecker} covers},
 fjournal = {Discrete Mathematics and Theoretical Computer Science. DMTCS},
 journal = {Discrete Math. Theor. Comput. Sci.},
 issn = {1365-8050},
 volume = {21},
 number = {4},
 pages = {16},
 note = {Id/No 15},
 year = {2019},
 language = {English},
 keywords = {05C70,05C10,05C25,57M10},
 url = {dmtcs.episciences.org/5891/pdf},
 zbMATH = {7150561},
 Zbl = {1430.05095}
}

@article{gevaykrncv,
 author = {G{\'e}vay, G{\'a}bor and Pisanski, Toma{\v{z}}},
 title = {Kronecker covers, {V}-construction, unit-distance graphs and isometric point-circle configurations},
 fjournal = {Ars Mathematica Contemporanea},
 journal = {Ars Math. Contemp.},
 issn = {1855-3966},
 volume = {7},
 number = {2},
 pages = {317--336},
 year = {2014},
 language = {English},
 doi = {10.26493/1855-3974.359.8eb},
 keywords = {05B30,05C12,52B10,52C30},
 zbMATH = {6390726},
 Zbl = {1320.05019}
}

@article{mycielski,
 author = {Mycielski, J.},
 title = {On the coloring of graphs},
 fjournal = {Colloquium Mathematicum},
 journal = {Colloq. Math.},
 issn = {0010-1354},
 volume = {3},
 pages = {161--162},
 year = {1955},
 language = {French},
 doi = {10.4064/cm-3-2-161-162},
 keywords = {05C15},
 url = {https://eudml.org/doc/210000},
 zbMATH = {3109034},
 Zbl = {0064.17805}
}
\end{document}